\newtheorem{thm}{Theorem}[section]
\newtheorem{cor}{Corollary}[section]
\newtheorem{rmk}{Remark}[section]
\newtheorem{exmp}{Example}[section]
\let \al=\alpha
\let \be=\beta
\let \var=\varphi
\let \vare=\varepsilon
\let \de=\delta
\let \th=\theta
\let \la=\lambda
\let \q=\quad
\let \med=\medskip
\let \smal=\smallskip
\let \dps=\displaystyle
\let \ul=\underline
\let \ul=\underline
\let \ol=\overline
\let \ka=\kappa
\DeclareMathOperator{\e}{e}
\DeclareMathOperator{\diag}{diag}
\newcommand{\R}{\mathbb{R}}
\newcommand{\N}{\mathbb{N}}
\begin{document}


\begin{center}
	\textbf{\Large{Permanence for nonautonomous  differential systems with delays in the linear and nonlinear terms}}
	\end{center}

 \centerline{\scshape Teresa Faria}
 
 \smal
 \centerline{
 Departamento de Matem\'atica and CMAFCIO, Faculdade de Ci\^encias, Universidade de Lisboa}
  \centerline{
Campo Grande, 1749-016 Lisboa, Portugal}
 
\centerline{  Email:
teresa.faria@fc.ul.pt}

\vskip .5cm

\begin{abstract}  
In this paper, we obtain sufficient conditions for the permanence of a  family of  nonautonomous systems of delay differential equations. 
This family includes   structured models from mathematical biology, with  either discrete or distributed delays in both the linear and nonlinear terms, and where typically   the nonlinear terms are nonmonotone.
Applications to generalized Nicholson and  Mackey-Glass systems are given. 
\end{abstract}

 {\it Keywords}:  delay differential systems,  persistence, permanence,  noncooperative systems. 

{\it 2010 Mathematics Subject Classification}:  34K12, 34K25,  34K20, 92D25.

\section{Introduction}
\setcounter{equation}{0}

In this paper, we investigate the persistence and permanence for a  class of multidimensional nonautonomous delay differential equations (DDEs), which includes a wide range of  structured models used in population dynamics,  neural networks,  physiological mechanisms,  engineering and many other fields.

We start by setting the abstract framework for the DDEs which we deal with in the next sections. For $\tau\ge 0$, consider the Banach space $C:=C([-\tau,0];\R^n)$   with the norm
$\dps\|\phi\|=\max_{\th\in[-\tau,0]}|\phi(\th)|$, where $|\cdot|$ is  a fixed norm in $\R^n$. We shall consider DDEs written in the abstract form
\begin{equation}\label{1.1}
x'(t)={\cal L}(t)x_t+f(t,x_t),\q t\ge t_0,
\end{equation}
where   $x_t\in C$ denotes  the segment of a solution $x(t)$ given by $x_t(\th)=x(t+\th), -\tau\le \th\le 0$, ${\cal L}(t):C\to\R^n$ is linear bounded and the nonlinearities are given by continuous functions  $f:[t_0,\infty)\times C\to [0,\infty)^n$. For simplicity, we set $t_0=0$. As  in many mathematical biology models, we shall  assume the existence and dominance of diagonal  linear  instantaneous negative feedback terms in  \eqref{1.1} and that
 each component $f_i$ of $f=(f_1,\dots,f_n)$ depends only on $t$ and on the  component $i$ of the solution: 
 \begin{equation}\label{1.3}
f(t,\phi)=(f_1(t,\phi_1),\dots, f_n(t,\phi_n))\q {\rm for}\q t\ge 0, \phi=(\phi_1,\dots,\phi_n)\in C.
\end{equation}


 Recently, there has been a renewed interest in questions of persistence and permanence for  DDEs. A number of  methods has been proposed to tackled different situations, depending on whether the equations are autonomous or not, scalar or multi-dimensional, monotone or nonmonotone.
See \cite{BB13,BB16,BBI,BIT,FariaAMC14,FariaJDDE16,FOS,FariaRost,GHM15,GHM18,GHM18a,ObayaSanz} and references therein, also for explanation of the models and motivation from  real world  applications.

Here,   the investigation concerning permanence  in
 \cite{FariaJDDE16,FOS} is pursued. In \cite{FariaJDDE16} only cooperative systems were considered, whereas in \cite{FOS} sufficient conditions for the  permanence of systems 
\begin{equation}\label{1.3}
x_i'(t)=-d_i(t)x_i(t)+\sum_{j=1}^n a_{ij}(t)x_j(t)+\sum_{k=1}^{m_i} \be_{ik}(t)    h_{ik}(t,x_i(t-\tau_{ik}(t))),\  i=1,\dots,n,\, t\ge 0,
\end{equation}
were established. Clearly, nonautonomous differential equations with multiple time-varying discrete delays are a  particular case of \eqref{1.3}. In this paper,  the more general framework of systems  \eqref{1.1} with (possibly distributed) delays in both ${\cal L}$ and $f$ is considered, although sharper results will be obtained for models of the form
$$x_i'(t)=-d_i(t)x_i(t)+\sum_{j=1}^n a_{ij}(t)x_j(t)+f_i(t,x_{i,t}),\  i=1,\dots,n,\, t\ge 0.
$$
The  criteria for permanence in  \cite{BBI,FariaAMC14,FOS} and many other works demand that all  the coefficients are bounded. More recently,  some authors have relaxed this restriction \cite{BB17,GHM15,GHM18,GHM18a}, though still under some boundedness requirements. Here, the boundedness of all the coefficients in \eqref{1.1} will not be a priori  assumed. We also emphasize that typically the nonlinearites $f_i(t,\phi_i)$ in \eqref{1.3} are not monotone in the second variable -- which is  the case of  Nicholson-type systems, for example. Nevertheless, some techniques for cooperative systems will be used. 
Our results  extend and improve some recent achievements in the  literature \cite{BBI,BIT,GHM18,GHM18a, Liu10,Tak1},
which mostly deal with scalar DDEs and/or cooperative $n$-dimensional models.

We now introduce some standard notation.
In what follows,  $\R^+=[0,\infty)$, the matrix $I_n$, or simply $I$, denotes the $n\times n$ identity matrix and  $\vec 1=(1,\dots, 1)\in\R^n.$ For $\tau>0$, the set
$C^+=C([-\tau,0];(\R^+)^n)$ is the cone of nonnegative functions in $C$ and $\le $  the usual partial order generated by  $C^+$: $\phi\le \psi$ if and only if $\psi-\phi\in C^+$.  A vector $v\in \R^n$  is identified in $C$ with the constant function $\psi(\th)=v$ for $-\tau\le \th\le 0$. For $\tau=0$,  we take $C=\R^n,  C^+=[0,\infty)^n$; a vector $v\in\R^n$ is positive  if all its components are positive, and we write $v>0$. We write $\phi< \psi$ if  $\psi(\th)<\phi(\th)$ for $\th\in [-\tau,0]$; the relations $\ge$ and $> $ are also defined in the usual way. 

For nonlinear DDEs \eqref{1.1}  under conditions of existence and uniqueness of solutions, $x(t,\sigma,\phi)$ denotes the solution of \eqref{1.1} with initial condition $x_\sigma=\phi$, for $(\sigma,\phi)\in \R^+\times C$. For models inspired by mathematical biology applications, we shall consider 
 $$C_0^+=\{\phi\in C^+:\phi(0)>0\}$$
 as the set of admissible initial conditions. Without loss of generality, we shall restrict the analysis to solutions $x(t,0,\phi)$ with $\phi \in C_0^+$, and assume that $f$ is sufficiently regular so that such solutions are defined on $\R^+$.
  If the set $C_0^+$ is (positively) invariant for \eqref{1.1}, the notions of (uniform) persistence, permanence and stability always refer to solutions with initial conditions in $C_0^+$.  In this way,
  we say that the system is {\bf uniformly persistent} (in $C_0^+$) if there exists a positive uniform lower bound  for all solutions with initial conditions in $C_0^+$; i.e., there is $m>0$ such that all solutions $x(t)=x(t,0,\phi)$ with $\phi \in C_0^+$ are defined on $\R^+$ and satisfy  $x_i(t,0,\phi)\ge m$ for $t\gg 1$ and $i=1,\dots, n$.  The system  \eqref{1.1} is said to be {\bf permanent} if there exist positive constants $m,M$  such that  all solutions $x(t)=x(t,0,\phi)$ with $\phi \in C_0^+$ are defined on $\R^+$ and satisfy $m\le x_i(t)\le M$ for $t\gg 1$ and $i=1,\dots, n$. As usual, the expression $t\gg 1$  means  ``for $t>0$ sufficiently large". 
   For short, here we say that a DDE $x'(t)=F(t,x_t)$ is {\bf cooperative} if  $F=(F_1,\dots,F_n)$ satisfies the {\it quasi-monotone condition} (Q) in \cite{Smith}: if $\phi,\psi\in C^+$ and $\phi\ge \psi$, then $F_i(t,\phi)\ge F_i(t,\psi)$ for $t\ge 0$, whenever $\phi_i(0)=\psi_i(0)$ for some $i$.
  
  The remainder of this paper is divided into three sections. In Section 2, we establish sufficient conditions for the uniform persistence and permanence for a large family of nonlinear system \eqref{1.1}.  To illustrate the results, generalized Nicholson and Mackey-Glass systems are considered  in Section 3, together  with examples, as well as  counter-examples showing the necessity of some hypotheses. The paper ends with a short section of conclusions and open problems.  
  
\section{Persistence and permanence for a class of  nonautomous DDEs} 
\setcounter{equation}{0}

In this section, we establish  explicit and easily verifiable criteria  for both the persistence and the   permanence of systems  \eqref{1.1} with nonlinearities $f$ expressed by \eqref{1.3}. 

Let $C:=C([-\tau,0];\R^n)$  with the supremum norm be the phase space. We start with  a general    nonutonomous  linear   differential equation in $C$,
\begin{equation}\label{Lin0}
x'(t)={\cal L}(t)x_t,
\end{equation}
where  ${\cal L}:\R\to L({\cal C},\R^n)$, $L(C,\R^n)$ is the usual space of bounded linear operators from $C$ to $\R^n$ equipped with  the operator norm,  and $t \mapsto {\cal L}(t)\phi$  is Borel measurable for each $\phi$, with $\|{\cal L}(t)\|$   bounded on $\R^+$ by a  function $m(t)$ in $L_{\rm loc}^1(\R^+;\R)$.

Assuming the exponential asymptotic stability of \eqref{Lin0}, next theorem provides conditions for the dissipativeness and  extinction of  perturbed nonlinear systems. Its proof is easily deduced from  the variation of constant formula \cite{HaleLunel} and  arguments similar to the ones for ODEs, thus it is omitted. 

\begin{thm}\label{thm2.2}  Assume that the system \eqref{Lin0} is exponentially asymptotically stable, and
consider the perturbed equation
\begin{equation}\label{LinP}
x'(t)={\cal L}(t)x_t+f(t,x_t),\q t\ge 0,
\end{equation}
 where $f:[0,\infty)\times S\to \R^n$ is continuous and $S$ is a (positively)  invariant set for \eqref{LinP}. \\
 (i) If  $f$ is bounded, 
 then  \eqref{LinP}
is dissipative; i.e.,  all solutions of \eqref{LinP}
are defined  on $[0,\infty)$ and there exists $M>0$ such that any solution $x(t)$ of \eqref{LinP} satisfies
$ \limsup_{t\to\infty}|x(t)|\le M.$\\
(ii) If   there exists $\be:\R^+\to\R^+$ measurable with $\int^{\infty}\be (s)\, ds<\infty$ such that
$|f(t,\phi)|\le \be(t)\|\phi\|,\ t\gg 1,$
then all solutions $x(t)$  of \eqref{LinP} satisfy $\limsup_{t\to\infty}x(t)=0.$
\end{thm}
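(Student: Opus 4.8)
The plan is to reduce both statements to the variation of constants formula for the linear part together with the exponential decay of its fundamental solution. Since \eqref{Lin0} is exponentially asymptotically stable, its fundamental matrix solution $X(t,s)$ (defined for $t\ge s$, with $X(s,s)=I$) satisfies an estimate of the form $\|X(t,s)\|\le K\e^{-\al(t-s)}$ for some constants $K\ge 1$ and $\al>0$; this is precisely the analytic content of exponential stability, and is what lets the ODE-type reasoning go through. Writing any solution $x(t)=x(t,0,\phi)$ of \eqref{LinP} via the formula in \cite{HaleLunel} as
$$x(t)=x^0(t)+\int_0^t X(t,s)f(s,x_s)\,ds,$$
where $x^0$ solves \eqref{Lin0} with the same initial segment $\phi$, one obtains the pointwise bound
$$|x(t)|\le K\e^{-\al t}\|\phi\|+K\int_0^t \e^{-\al(t-s)}|f(s,x_s)|\,ds.$$
Every subsequent estimate follows from inserting the two hypotheses on $f$ into this single inequality.

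For (i) I would simply substitute the uniform bound $|f(t,\cdot)|\le M_0$ into the integral, obtaining
$$|x(t)|\le K\e^{-\al t}\|\phi\|+\frac{KM_0}{\al}\big(1-\e^{-\al t}\big).$$
The right-hand side is finite for every finite $t$, which rules out blow-up and hence gives global existence on $[0,\infty)$ by the usual continuation argument; letting $t\to\infty$ then yields $\lis{|x(t)|}\le KM_0/\al=:M$, the asserted dissipativeness.

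For (ii) the integrand now carries the solution itself through $|f(s,x_s)|\le\be(s)\|x_s\|$, so the estimate turns into a Gronwall-type inequality, and I would argue in two stages. First, boundedness: fix $t_1\gg1$ so large that the bound on $f$ holds on $[t_1,\infty)$ and, using $\int^\infty\be<\infty$, that $K\int_{t_1}^\infty\be(s)\,ds\le\tfrac12$; set $M_T=\sup_{t_1-\tau\le s\le T}|x(s)|$ for $T>t_1$. Splitting the delay window according to whether the argument lies below or above $t_1$, the variation of constants bound gives $M_T\le K\|x_{t_1}\|+\tfrac12 M_T$, whence $M_T\le 2K\|x_{t_1}\|$ uniformly in $T$, so every solution is bounded. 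Second, decay: put $L=\lis{\|x_t\|}$, which equals $\lis{|x(t)|}$ because the delay window is finite. Given $\vare>0$ choose $T_1$ with $\|x_s\|\le L+\vare$ for $s\ge T_1$, insert this into the integral, and bound $\int_{T_1}^t\e^{-\al(t-s)}\be(s)\,ds\le\int_{T_1}^\infty\be(s)\,ds$. Taking $\lis{}$ kills the homogeneous term and leaves $L\le K(L+\vare)\int_{T_1}^\infty\be(s)\,ds$; enlarging $T_1$ so that $K\int_{T_1}^\infty\be<\tfrac12$ forces $L\le\tfrac12(L+\vare)$, i.e.\ $L\le\vare$, and since $\vare$ is arbitrary, $L=0$, which is exactly $x(t)\to0$.

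The one genuinely delay-specific point, and the step I would treat most carefully, is the presence of the supremum norm $\|x_s\|$ inside the integral: unlike the ODE case, the unknown enters the Gronwall estimate through a sliding window rather than pointwise, so the a priori bound must be set up on the enlarged interval $[t_1-\tau,T]$ and the initial segment handled separately, as above. The exponential bound $\|X(t,s)\|\le K\e^{-\al(t-s)}$ is the other ingredient I would state explicitly, since it is the exact form in which the exponential asymptotic stability of \eqref{Lin0} is used.
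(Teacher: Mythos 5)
Your proof is correct and follows exactly the route the paper indicates (the paper omits the proof, stating only that it follows from the variation of constants formula of \cite{HaleLunel} and ODE-type arguments): you use the exponential bound $\|X(t,s)\|\le K\e^{-\al(t-s)}$ on the fundamental solution, substitute each hypothesis on $f$ into the resulting integral inequality, and handle the delay via a Gronwall/contraction estimate on the sliding window. The two-stage argument in (ii) (a priori boundedness on $[t_1-\tau,T]$, then the $\vare$-limsup argument) is sound and correctly addresses the only delay-specific subtlety.
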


For \eqref{Lin0}, we now suppose that  ${\cal L}=({\cal L}_1,\dots,{\cal L}_n)$ is given by
\begin{equation}\label{1.2}
{\cal L}_i(t)\phi=-d_i(t)\phi_i(0)+\sum_{j=1}^nL_{ij}(t)\phi_j,\q  t\ge 0, \phi=(\phi_1,\dots,\phi_n)\in C,  i=1,\dots,n,
\end{equation}
with $d_i(t)>0$ and $ L_{ij}(t)$  bounded  linear functionals. Although it is not relevant for our results, we may assume that   $L_{ii}(t)$  is non-atomic at zero (see \cite{HaleLunel} for a definition). For \eqref{Lin0}, define 
the $n\times n$ matrix-valued functions 
 \begin{equation}\label{D&A}
  D(t)=\diag \,  (d_1(t),\dots,d_n(t)),\q  A(t)=\Big [a_{ij}(t)\Big ],
 \end{equation}
where $$a_{ij}(t)=\|L_{ij}(t)\|,\q t\ge 0,\, i,j\in\{1,\dots,n\}.$$   

For \eqref{Lin0}, the  general hypotheses below will be considered:
  \begin{itemize}
\item[(H1)] the functions $d_{i}:[0,\infty)\to (0,\infty), L_{ij}:[0,\infty)\to L(C([-\tau,0],\R),\R)$ are continuous   (for some $\tau\ge 0$),   $i,j=1,\dots,n$;
\item[(H2)]  there exist a vector $v> 0$ and a constant $\de>0$ such that $\Big [D(t)-A(t)-\de I_n\Big ]v\ge 0$ for  $t\gg 1$.
\end{itemize}

Instead of (H2), one may assume: 
 \begin{itemize}
\item[(H2*)]  
 there exist a vector $v> 0$ and a constant $\al>1$ such that $D(t)v\ge \al A(t)v $ for  $t\gg 1$ .
\end{itemize}


 With the notation in \eqref{D&A}, e.g.  assumption (H2) above translates as: there exist a vector $v=(v_1,\dots,v_n)> 0$ and $T\ge 0,\de>0$ such that $d_i(t)v_i-\sum_{j=1}^na_{ij}(t)v_j\ge \de v_i$ for all $t\ge T,\, i=1,\dots,n$.

 Next theorem gives some  stability results  selected from \cite{Faria20}.

 \begin{thm}\label{thm2.1}  Consider system \eqref{Lin0} under (H1), and assume one of the following sets of conditions:
  \vskip 0mm
  (i) (H2) is satisfied and $a_{ij}(t)$ are bounded functions on $\R^+$ for all $i,j=1,\dots,n$;
\vskip 0mm
(ii)  (H2*) is satisfied and
$\liminf_{t\to\infty}d_{i}(t)>0$ for $ i=1,\dots,n$;\vskip 0mm
(iii)   \eqref{Lin0} is the ODE system $x_i'(t)=-d_i(t)x_i(t)+\sum_{j\ne i}d_{ij}(t)x_j(t),1\le i\le n$, and  (H2) is satisfied with $a_{ij}(t)=|d_{ij}(t)|$.\vskip 0mm
Then, \eqref{Lin0} is exponentially asymptotically stable; in other words,  there exist $k,\al>0$ such that 
$$|x(t,t_0,\var)|\le k\e^{-\al (t-t_0)}\|\var\|\q {\rm for\ all \ } t\ge t_0\ge 0,\var \in C.$$
\end{thm}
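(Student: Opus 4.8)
The plan is to reduce the vector system to a family of scalar differential inequalities for $y_i(t)=|x_i(t)|$ and then run a weighted exponential barrier (Razumikhin-type) argument, with the vector $v>0$ from (H2)/(H2*) supplying the weights. First I would fix a solution $x(t)=x(t,t_0,\var)$ and, writing each bounded functional as $L_{ij}(t)\psi=\int_{-\tau}^0\psi(\th)\,d\eta_{ij}(t,\th)$ with total variation $\|L_{ij}(t)\|=a_{ij}(t)$, estimate $|L_{ij}(t)(x_t)_j|\le a_{ij}(t)\sup_{s\in[t-\tau,t]}|x_j(s)|$. Separating the instantaneous diagonal feedback $-d_i(t)x_i(t)$, the upper Dini derivative of $y_i$ then satisfies, for $t\gg1$,
\[
D^+y_i(t)\le -d_i(t)\,y_i(t)+\sum_{j=1}^n a_{ij}(t)\sup_{s\in[t-\tau,t]}y_j(s),\qquad i=1,\dots,n.
\]

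The core step is to produce a \emph{uniform} exponential rate. I would look for constants $\la>0$, $\vare_0>0$ and $T\ge0$ for which the strict inequality
\[
d_i(t)v_i\ \ge\ \la v_i+\e^{\la\tau}\sum_{j=1}^n a_{ij}(t)v_j+\vare_0 v_i,\qquad t\ge T,\ i=1,\dots,n,
\]
holds, and here the three hypothesis sets enter differently. Under (i), (H2) gives the additive gap $d_i(t)v_i-\sum_j a_{ij}(t)v_j\ge\de v_i$, and since the $a_{ij}$ are bounded the excess term $(\e^{\la\tau}-1)\sum_j a_{ij}(t)v_j$ tends to $0$ uniformly in $t$ as $\la\to0^+$, so it can be absorbed into half of the $\de$-gap. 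Under (ii), (H2*) provides the \emph{multiplicative} slack $\sum_j a_{ij}(t)v_j\le\al^{-1}d_i(t)v_i$ with $\al>1$; choosing $\la<\tau^{-1}\ln\al$ makes $\e^{\la\tau}/\al<1$, and $\liminf_t d_i(t)>0$ furnishes the positive lower bound on $d_i$ needed to dominate the $\la v_i$ term. Under (iii) the equation is an ODE, so $\tau=0$, $\e^{\la\tau}=1$, and any $\la<\de$ works directly from (H2) with no boundedness or $\liminf$ requirement; this is exactly why those extra conditions are dispensable in the instantaneous case.

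With such $\la,\vare_0,T$ fixed, I would carry out the barrier argument. Setting $T_0=\max\{t_0,T\}$, choose $M>0$ so large that $y_i(s)\le Mv_i\e^{-\la(s-T_0)}$ on $[T_0-\tau,T_0]$ (possible since $y_i$ is continuous and $\e^{-\la(s-T_0)}\ge1$ there), and claim $y_i(t)\le Mv_i\e^{-\la(t-T_0)}$ for all $t\ge T_0$. If this failed, let $t_1$ be the first crossing time and $i$ a violating index; then $y_i(t_1)=Mv_i\e^{-\la(t_1-T_0)}$ while $\sup_{s\in[t_1-\tau,t_1]}y_j(s)\le Mv_j\e^{\la\tau}\e^{-\la(t_1-T_0)}$ for every $j$. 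Feeding these into the Dini inequality and invoking the strict rate condition yields $D^+y_i(t_1)\le-(\la+\vare_0)Mv_i\e^{-\la(t_1-T_0)}<-\la Mv_i\e^{-\la(t_1-T_0)}$, which contradicts the fact that a first upward crossing forces $D^+y_i(t_1)\ge-\la Mv_i\e^{-\la(t_1-T_0)}$. Hence the bound holds, giving componentwise exponential decay with rate $\la$.

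Finally, to upgrade this to the stated uniform estimate $|x(t,t_0,\var)|\le k\,\e^{-\al(t-t_0)}\|\var\|$ valid for \emph{all} $t_0\ge0$ with constants independent of $t_0$ and $\var$, I would control the transient $[t_0,T]$ when $t_0<T$: on this compact interval, continuity of the coefficients together with the local integrable bound on $\|{\cal L}(t)\|$ give, via Gronwall, $\|x_t\|\le C\|\var\|$ with $C=C(T)$ independent of $t_0$ and $\var$, after which the decay from $T$ onward applies and $M$ can be taken proportional to this bound, making $k$ uniform. The main obstacle I anticipate is precisely the construction of the uniform rate $\la$ with a positive gap $\vare_0$ in the second step: it is here that unbounded coefficients could force $\la\to0$, and the roles of the boundedness of $A(t)$ in (i), of the multiplicative margin in (H2*) together with $\liminf d_i>0$ in (ii), and of $\tau=0$ in (iii) are exactly to prevent this degeneracy.
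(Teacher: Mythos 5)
Your argument is correct, but it takes a genuinely different route from the paper: the paper does not actually prove Theorem \ref{thm2.1}, it simply invokes the stability criteria of \cite[Theorem 3.1]{Faria20}, whereas you supply a self-contained proof. Your route --- passing to $y_i=|x_i|$, the Dini inequality $D^+y_i(t)\le -d_i(t)y_i(t)+\sum_j a_{ij}(t)\sup_{[t-\tau,t]}y_j$, and a weighted exponential barrier $Mv_i\e^{-\la(t-T_0)}$ contradicted at a first crossing time --- is a classical Halanay/Razumikhin-type argument, in the same spirit as (but independent of) the cited reference, which also covers infinite delay. The three cases are handled correctly: the key inequality $d_i(t)v_i\ge \la v_i+\e^{\la\tau}\sum_j a_{ij}(t)v_j+\vare_0 v_i$ follows in (i) because boundedness of $A(t)$ lets the excess $(\e^{\la\tau}-1)\sum_j a_{ij}(t)v_j$ be absorbed uniformly into the additive gap $\de$; in (ii) because the multiplicative margin $\al>1$ survives the factor $\e^{\la\tau}$ once $\la<\tau^{-1}\ln\al$, and $\liminf_t d_i(t)>0$ then dominates $\la+\vare_0$; and in (iii) because $\tau=0$. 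What your proof buys is transparency about exactly where each hypothesis enters --- it makes Example \ref{exmp2.3}, where (H2) holds with unbounded $a_{ij}$ and stability fails, readable as the degeneration $\la\to 0^+$. Two small points to tidy: in case (ii) the constant $1-\e^{\la\tau}/\al$ depends on $\la$, so first fix $\la\le (2\tau)^{-1}\ln\al$ to secure the uniform lower bound $1-\al^{-1/2}$ before shrinking $\la$ and $\vare_0$; and in the barrier step dispose of the degenerate case $M=0$ (solution identically zero on the initial interval), or simply take $M>0$, since the strict contradiction $-(\la+\vare_0)M<-\la M$ requires $M>0$.
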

\begin{proof} The result follows from the criteria in \cite[Theorem 3.1]{Faria20}.
\end{proof}

Henceforth,  we consider delay differential systems written as
\begin{equation}\label{NonLin}
x_i'(t)=-d_i(t)x_i(t)+\sum_{j=1}^n L_{ij}(t)x_{j,t}
+f_i(t,x_{i,t}), \q t\ge0,  i=1,\dots,n,
\end{equation}
with  the linear functionals  $L_{ij}(t)$  {\it nonnegative} (i.e.,  $L_{ij}(t)\phi_j\ge 0$ for $\phi_j\ge 0$) and  
 $f_i(t,\phi_i)$  continuous and  satisfying same requirements  formulated below.
 Recall that, by the Riesz representation theorem, the nonnegative bounded functionals $L_{ij}(t)$ have a representation 
 \begin{equation}\label{Lij}
 L_{ij}(t)\phi_j =a_{ij}(t) \int_{-\tau}^0 \phi_j(s) d_s\nu_{ij}(t,s),
 \end{equation} 
 where $a_{ij}(t)=\|L_{ij}(t)\|$, the functions $\nu_{ij}(t,s)$ are defined for $(t,s)\in \R^+\times [-\tau,\infty)$,  are continuous in $t$, left-continuous and {\it nondecreasing}  in  $s$,  and normalized so that
$\int_{-\tau}^0 d_s\nu_{ij}(t,s)=1,\ t\ge 0.$
 In the case of no delays in \eqref{Lij}, then  $L_{ij}(t)x_{j,t}=a_{ij}(t)x_j(t)$ with $a_{ij}(t)\ge 0$.
 Clearly, this framework includes the particular case of  DDEs with multiple  time discrete delays:
  \begin{equation}\label{3_disc}
\begin{split}
x_i'(t)=&-d_i(t)x_i(t)+\sum_{j=1}^n \sum_{p=1}^{n_j}a_{ijp}(t)x_j(t-\sigma_{ijp}(t))\\&+\sum_{k=1}^{m_i} \be_{ik}(t)    g_{ik}(t,x_i(t-\tau_{ik}(t))),\q i=1,\dots,n,\ t\ge 0,
\end{split}
\end{equation}
where the coefficients and delays  are all continuous and nonnegative.

Systems \eqref{NonLin}  are sufficiently general  to encompass many relevant models from mathematical biology and other fields. 
In some contexts, they are interpreted as  structured  models for
 populations distributed over $n$ different  classes or patches, with migration among the patches, where $x_i(t)$ is the density of the species  on  class $i$,
$a_{ij}(t)$ ($j\ne i)$ is the migration coefficient from class $j$ to class $i$,
$d_i(t)$ the coefficient of instantaneous loss for class $i$, and $f_i(t,\phi_i)$ is the   birth function for  class $i$.   Although most models do not include delays in the migration terms, structured models where delays intervene in the linear terms have deserved the attention of a number of researchers, see e.g. Takeuchi et al. \cite{Tak1}.  
 We also observe that in biological models most situations  require  a single delay for each population, however multiple or distributed delays naturally appear in  neural networks models, or generalizations of the classic Mackey-Glass equation used as  hematopoiesis models. We refer the reader to \cite{BB13,BB16,BB17,BBI,BIT,GHM18,Smith,Tak1}, for  real interpretation of the DDEs under consideration and some applications.

In what follows, for $\phi=(\phi_1,\dots,\phi_n)\in C^+$ we use the notation
$$\ul{\phi_i}=\min_{s\in [-\tau,0]} \phi_i(s).$$

To establish the permanence of  \eqref{NonLin}, we further impose that the nonlinearities satisfy the following conditions:

%

	\begin{itemize}
\item[ (H3)] the functions $f_i:\R^+\times C^+\to\R^+$ are completely continuous and locally Lipschitzian in the second variable, $i\in \{1,\dots,n\};$
\item[(H4)]  there exist continuous functions  $\be_i:\R^+\to (0,\infty),h_i^-:\R^+\to \R^+$,  
with $ h_i^-(0)=0, h_i^-(x)>0$ for $x>0$ and with right-hand derivative at zero $(h_i^-)'(0+)=1$, such that,  for $i\in \{1,\dots,n\}$,
\begin{equation}\label{autonBound}
f_i(t,\phi_i)\ge \be_i(t) h_i^-(\ul{\phi_i}),\q t\gg 1\ {\rm and}\ \phi_i\in C^+([-\tau,0];\R^+); 
 \end{equation}
 \item[(H5)]  there exist a positive vector $v$ and a constant $\de >0$ such that $[M(t)-\de I_n]v\ge 0,\ t\gg 1$, where $M(t)$ is the matrix-valued function defined by
\begin{equation}\label{M(t)}
\begin{split}
M(t)&=B(t)+A(t)-D(t),\q {\rm where}\\
B(t)&=\diag \, (\be_1(t),\dots,\be_n(t)),\q t\ge 0,
\end{split}
\end{equation}
 with
 $D(t),A(t)$  as above and $\be_i(t)$ as in (H4). \end{itemize}

 Instead  of  (H5), we shall often assume:
 \begin{itemize}
\item[(H5*)]  there exist a vector $v> 0$ and a constant $\al>1$ such that $ B(t)v\ge\al \Big[D(t)-A(t)\Big ]v$ for  $t\gg 1$.
\end{itemize}

 Some comments about these assumptions are given in the remarks below.

 \begin{rmk}\label{rmk2.2} {\rm  
If the coefficients $\be_i(t)$ are {\it bounded}, then (H5) implies (H5*).
Indeed, if (H5) holds and  there exists $M=\max_{1\le i\le n}\sup_{t\ge 0} \be_i(t)v_i$  (as a matter of fact, it suffices that $d_i(t)v_i-\sum_j a_{ij}(t)v_j\le M$ for some $M$),  then (H5*) is satisfied with the same $v>0$ and any $\al\in(0, 1+\de/M]$. The converse is also true if $\be_i(t)$ are all {\it bounded from below}  by a positive constant, since in this case (H5*) implies that (H5) is satisfied with the same $v>0$ and $\de\in (0,(1-\al^{-1})c]$ for $c>0$ such that $c=\min_{1\le i\le n}\inf_{t\ge 0}\be_i(t)v_i$. Similarly, one easily verifies (conf. \cite{Faria20}) that  when  the coefficients $d_i(t)$ are bounded from below by a positive constant, then (H2*) implies (H2); and that,  if $a_{ij}(t)$ are all bounded, then (H2) implies (H2*).

In the study of stability for nonautonous  DDEs, a condition as (H2*) with $v=\vec 1$ has been often presented (see e.g. \cite{FariaJDDE16,GHM18})  in the equivalent form  
$\liminf_{t\to \infty} \frac{d_i(t)}{\sum_{j=1}^na_{ij}(t)}>1.$
Analogously, (H5*) with $v=\vec 1$ can be written as
$\liminf_{t\to \infty} \frac{\be_i(t)}{d_i(t)-\sum_{j=1}^na_{ij}(t)}>1.$
 }\end{rmk}

 \begin{rmk} \label{rmk2.3} {\rm If \eqref{autonBound} holds with a function $h_i^-$ satisfying $h_i^-(0)=0$, $h_i^-(x)>0$ on $(0,\infty)$ and $(h_i^-)'(0+)=c_i>0$, by replacing $h_i^-(x),\be_i(t)$ by $\bar h_i^-(x)=c_i^{-1}h_i^-(x),\bar \be_i(t)=c_i\be_i(t)$, respectively, we may always assume that $(h_i^-)'(0+)=1$.  }
 \end{rmk}

 \begin{rmk}\label{rmk2.4}  {\rm For \eqref{NonLin} under the above hypotheses, rescaling the variables by $\hat x_i(t)=v_i^{-1}x_i(t)$ $ (1\le i\le n)$, where $v=(v_1,\dots,v_n)> 0$ is a vector as in (H5) or (H5*), we obtain a new system
\begin{equation}\label{3'}
\hat x_i'(t)=- d_i(t)\hat x_i(t)+\sum_{j=1}^n \hat L_{ij}(t)\hat x_{j,t}+\hat f_i(t,\hat x_{i,t}),\  i=1,\dots,n,\ t\ge 0,
\end{equation}
where $ \hat a_{ij}(t):=\|\hat L_{ij}(t)\|= v_i^{-1}a_{ij}(t) v_j$ and
$\hat f_i(t,\phi_i)=v_i^{-1}f_i(t,v_i\phi_i)$ satisfy (H3)-(H4), with $h_i^-(x)$ replaced by $\hat h_i^-(x)=v_i^{-1}h_i^-(v_ix)$. In this way, and after dropping the hats for simplicity, we may consider an original system \eqref{NonLin} and  take $v=\vec 1$  in (H5)  or (H5*).}\end{rmk}

 %


The main criterion for the permanence of \eqref{NonLin} is now established.

\begin{thm}\label{thm2.3} For  \eqref{NonLin}, assume   (H1)--(H4). Furthermore, let the following conditions hold:\vskip 0mm
 (i) either $L_{ij}(t)\phi_j=a_{ij}(t)\phi_j(0)$  with $a_{ij}(t)\ge 0$, for all $ i,j=1,\dots,n$ and $t\ge 0$ (in other words, there are no delays in  \eqref{Lin0}), or  $L_{ij}(t)$ are  nonnegative and  $a_{ij}(t)=\|L_{ij}(t)\|$  are bounded on $\R^+$, $i,j=1,\dots,n$;\vskip 0mm
 (ii) either (H5) is satisfied  and  $\dps\limsup_{t\to\infty}\be_i(t)<\infty$ for all $i$, or (H5*) is satisfied and $\dps\liminf_{t\to\infty}\be_i(t)>0$ for all $i$;\vskip 0mm
  (iii)  $f_i(t,\phi_i)$ are bounded,  $ i=1,\dots,n$.\\
Then   system  \eqref{NonLin}  is permanent (in $C_0^+$).\end{thm}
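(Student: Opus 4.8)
The plan is to obtain permanence by establishing the two bounds separately: a uniform upper bound $M$ (dissipativeness) and a uniform positive lower bound $m$ (uniform persistence). For the upper bound I would first record that $C_0^+$ is positively invariant, so every solution stays positive; indeed, at any instant with $x_i(t)=0$ one has $x_i'(t)=\sum_j L_{ij}(t)x_{j,t}+f_i(t,x_{i,t})\ge 0$, since the $L_{ij}(t)$ and $f_i$ are nonnegative. Dissipativeness then follows from Theorem \ref{thm2.2}(i): by (iii) the nonlinearity $f$ is bounded, and the homogeneous linear part \eqref{Lin0} is exponentially asymptotically stable by Theorem \ref{thm2.1} (using the boundedness of the $a_{ij}(t)$ in (i) together with the dominance of the diagonal instantaneous feedback). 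Hence there is $M>0$ with $\limsup_{t\to\infty}x_i(t)\le M$ for every $\phi\in C_0^+$ and all $i$.

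For the lower bound I would first rescale the variables as in Remark \ref{rmk2.4}, so that the vector in (H5)/(H5*) is $v=\vec 1$; thus (H5) reads $\be_i(t)+\sum_j a_{ij}(t)-d_i(t)\ge\de$ and (H5*) reads $\be_i(t)\ge\al(d_i(t)-\sum_j a_{ij}(t))$ with $\al>1$, for $t\gg 1$. Writing $\ell_i=\liminf_{t\to\infty}x_i(t)\in[0,M]$, the target is a bound $\min_i\ell_i\ge m>0$ with $m$ depending only on the structural data. The engine is the minorant (H4), $f_i(t,\phi_i)\ge\be_i(t)h_i^-(\ul{\phi_i})$, together with the normalization $(h_i^-)'(0+)=1$, which gives $h_i^-(s)/s\to 1$ as $s\to 0^+$ and $h_i^-(s)\ge(1-\eta)s$ on some interval $[0,\rho_\eta]$.

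The core step is a fluctuation argument applied to a component $i_0$ realizing $\min_i\ell_i$. Assume first $\ell_{i_0}>0$. I choose $t_k\to\infty$ with $x_{i_0}(t_k)\to\ell_{i_0}$ and $x_{i_0}'(t_k)\to 0$; bounding the delayed terms below by $L_{i_0j}(t_k)x_{j,t_k}\ge a_{i_0j}(t_k)(\ell_{i_0}-\epsilon)$ (using $\ell_j\ge\ell_{i_0}$) and by $f_{i_0}\ge\be_{i_0}(t_k)h_{i_0}^-(\ul{x_{i_0,t_k}})$, and passing to a subsequence along which the coefficients (bounded by (i) and (ii)) converge to $d^*,a_j^*,\be^*$, the equation forces $(d^*-\sum_j a_j^*)\ell_{i_0}\ge\be^* h_{i_0}^-(\ell_{i_0})$. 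Feeding in (H5) in the limit, $d^*-\sum_j a_j^*\le\be^*-\de$, yields $h_{i_0}^-(\ell_{i_0})/\ell_{i_0}\le 1-\de/\be^*<1$ (and the bound $1/\al<1$ under (H5*)); since this ratio tends to $1$ as the argument tends to $0$, I obtain a uniform lower bound $\ell_{i_0}\ge m>0$. The boundedness in (ii), namely $\limsup\be_i<\infty$ respectively $\liminf\be_i>0$, is exactly what makes $\de/\be^*$, respectively the division by $\be^*$, legitimate.

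The main obstacle is ruling out the extinction case $\ell_{i_0}=0$, on which the fluctuation identity degenerates to $0\ge 0$. Here I would argue from the instability of the trivial solution: while $\|x_t\|\le\rho_\eta$ the linear minorant gives $x_i'(t)\ge -d_i(t)x_i(t)+\sum_j L_{ij}(t)x_{j,t}+(1-\eta)\be_i(t)\ul{x_{i,t}}$, and the associated cooperative linear system has, in the direction $\vec 1$, growth exponent $-d_i+\sum_j a_{ij}+(1-\eta)\be_i\ge\de-\eta\be_i>0$ for $\eta$ small, by (H5). A cooperative system with strictly positive growth exponent admits no nontrivial solution decaying to $0$, so a comparison shows that $x$ cannot be driven to the origin, contradicting $\ell_{i_0}=0$. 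Making this rigorous is the delicate point, because the differential inequality holds only while the solution remains small, because the original $f_i$ is nonmonotone (forcing the comparison to be run against the linearized cooperative minorant rather than against $f_i$ itself), and because the problem is nonautonomous with delays; this is where conditions (i) and the convergence of coefficients along the fluctuation sequence are used. Combining the two cases gives $\min_i\ell_i\ge m$, and together with the upper bound this establishes permanence.
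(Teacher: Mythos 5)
Your upper-bound argument coincides with the paper's (positivity of $C_0^+$, then Theorems \ref{thm2.1} and \ref{thm2.2}), but your lower-bound strategy is genuinely different: you analyse $\ell_i=\liminf_{t\to\infty}x_i(t)$ via the fluctuation lemma and an ``instability of zero'' comparison, whereas the paper compares \eqref{NonLin} from below with the truncated cooperative system \eqref{Coop}, proves that the ordered interval $[m,\infty)^n$ is positively invariant for it, and then runs a step-wise iteration on the window minima $s_k=\min_j\min_{I_k}x_j$ over intervals $I_k$ of length $\tau$, using the recursion $s_{k+1}\ge\al\min_jH_j(s_k)$. As written, your route has two genuine gaps.

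First, the step ``passing to a subsequence along which the coefficients converge to $d^*,a_j^*,\be^*$'' is not available: the theorem does not assume $d_i(t)$ bounded, and in the no-delay alternative of (i) the $a_{ij}(t)$ --- and hence, via (H5), the $d_i(t)$ --- may be unbounded (see Examples \ref{exmp2.1} and \ref{exmp2.2}, where $d_i(t)=t^\eta$); under the (H5*) alternative of (ii) the $\be_i(t)$ need not be bounded either. So the limiting identity $(d^*-\sum_ja_j^*)\ell_{i_0}\ge\be^*h_{i_0}^-(\ell_{i_0})$ cannot be extracted, and one cannot simply divide out the unbounded terms. This is exactly why the paper works with the pointwise-in-$t$ inequalities \eqref{h5'}--\eqref{h5} evaluated at interior minimum points, where $x_i'(t_0)\le 0$ and the sign of $-d_i(t_0)+\sum_ja_{ij}(t_0)+\al^{-1}\be_i(t_0)$ is controlled for every $t_0$ with no compactness needed. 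Second, the case $\ell_{i_0}=0$, which you correctly identify as the crux, is only asserted: ``a cooperative system with strictly positive growth exponent admits no nontrivial solution decaying to $0$'' is not a quotable fact for a nonautonomous delay system with possibly unbounded coefficients, your differential inequality holds only while $\|x_t\|\le\rho_\eta$ (so re-entries into that region must be controlled), and the bound $\de-\eta\be_i(t)>0$ again presupposes $\be_i$ bounded. Making this rigorous is precisely the content of the paper's Steps 2--4, and without some equivalent of that iterative argument the proof is incomplete.
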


\begin{proof} The proof follows along the main ideas in \cite[Theorem 3.3]{FOS}, however new arguments are used to take into account  the more general form of \eqref{NonLin}, that  delays are allowed in the linear part and that the coefficients $d_i(t)$ are not required to be bounded -- as well as $a_{ij}(t)$, if there are no delays in $L_{ij}(t)$.
\smal


{\it Step 1}.  Writing  \eqref{NonLin} as $x'(t)=F(t,x_t)$, it is clear that $F$ is continuous, locally Lipschitzian in the second variable and bounded on bounded sets of $\R^+\times C^+$. From Theorem \ref{thm2.1}, \eqref{Lin0} is exponentially asymptotically stable  (for the case of no delays in the linear functionals $L_{ij}(t)$, recall that the boundedness of $a_{ij}(t)$ is not required). Theorem \ref{thm2.2} implies that  \eqref{NonLin} is dissipative.
Observe that the solutions of \eqref{NonLin} satisfy the ordinary differential inequalities $ x_i'(t)\ge - d_i(t)x_i(t)\ (1\le i\le n)$, thus the solutions $x(t,0,\phi)$ with $\phi\in C_0^+$ are positive for $t\ge 0$.

From (ii) and Remark \ref{rmk2.2},  both (H5) and (H5*) are satisfied, with a common vector $v>0$.
 By the scaling described  in Remark \ref{rmk2.4}, without loss of generality we may take $v=\vec 1$ in (H5), (H5*). Thus,    there are $\de_0>0$, $ \al_0>1$ and $T_0>0$ such that \begin{equation}\label{h5'}
\begin{split}
\be_i(t)&\ge d_i(t)-\sum_j a_{ij}(t)+\de_0,\\
\be_i(t)&\ge \al_0\Big (d_i(t)-\sum_j a_{ij}(t)\Big),\q t\ge T_0,i=1,\dots,n.
\end{split}
\end{equation}
Summing up these inequalities, we obtain that
\begin{equation}\label{h5}
\begin{split} 
\al^{-1}\be_i(t)\ge \Big (d_i(t)-\sum_j a_{ij}(t)\Big)+\de\,\q t\ge T_0,i=1,\dots,n,
\end{split}
\end{equation}
for $\al=(1+\al_0)/2>1, \de_0=\de/2$.

 Choose $M>0$ such that any positive solution $x(t)=x(t,0,\phi)$ of \eqref{NonLin}
 satisfies $0< x_i(t)\le M$, for all $i$ and $t\ge T$, with $T=T(\phi)$ sufficiently large.  Next,
choose $m>0$ such that  $h_i^-(x)$ is strictly increasing 
 with  $\al^{-1}x<h_i^-(x)$  on the interval $(0,m]$, and
  $h_i^-(m)=\min_{x\in [m,M]}h_i^-(x)$ for  all $i$. 
    In this way,  $f_i(t,x_{i,t})\ge \be_i(t) H_i\big (\dps\min_{-\tau\le s\le 0}x_i(t+s)\big)$  for any solution $x(t)$ of \eqref{NonLin},  for all $ i$ and $t\ge T_1:=\max \{T(\phi),T_0\}$, where $H_i(x)=\begin{cases}h_i^-(x)\q {\rm if}\q 0\le x\le m\\
h_i^-(m)\q {\rm if}\q x\ge m\end{cases}$. Replacing $h_i^-(x)$ by the function $\min\{h_i^-(x),x\}$, we may also assume that $H_i(x)\le x$ for all $x\ge 0$.
Note  that $H_i$ are monotone functions.
  We now compare  \eqref{NonLin} from below with the auxiliary  cooperative system:
\begin{equation}\label{Coop}
\begin{split}
x_i'(t)&=-d_i(t)x_i(t)+\sum_{j=1}^n L_{ij}(t)x_{j,t}\\
&+\be_i(t)  H_i\big (\dps\min_{-\tau\le s\le 0}x_i(t+s)\big)
=:G_i(t,x_t),\q  i=1,\dots,n.
\end{split}
\end{equation}
  From \cite{Smith}, this implies that $x(t,T_1,\phi,F)\ge x(t,T_1,\phi,G)$, where $x(t,T_1,\phi,F)$ and $ x(t,T_1,\phi,G)$ are the solutions of \eqref{NonLin} and \eqref{Coop} with initial condition $x_{T_1}=\phi\in C_0^+$, respectively.
Therefore,  if \eqref{Coop} is uniformly persistent,  \eqref{NonLin} is uniformly persistent as well.
 
  We now derive the uniform persistence of \eqref{Coop} by showing that, for any solution $x(t)=x(t,t_0,\phi,G)$ of \eqref{Coop},
 there exists $T\ge t_0$ such that
 \begin{equation}\label{m}
  x_i(t)\ge m\q {\rm for\ all}\q  t\ge T,1\le i\le n.
\end{equation}
This is   proven in several steps.

\med

{\it Step 2}.    
We first prove that  the ordered interval $[m,\infty)^n:=\{\phi\in C: \phi_i\ge m,\, i=1,\dots,n\}\subset C$ is  invariant for  \eqref{Coop}  for $t\in[T_0,\infty)$. 

 Note that both the functions $H_i(x)$ and the operators $L_{ij}(t)$ are nondecreasing   and $H_i(x)=h_i^-(x)> \al^{-1}x$ on $(0,m]$. If $\phi\in [m,\infty)^n$ and $\phi_i(0)=m$ for some $i$, from \eqref{h5'}  we therefore obtain, for $t\ge T_0$,
$$G_i(t,\phi)\ge m\Big [-d_i(t)+\sum_j a_{ij}(t)\Big]+\be_i(t) H_i(m)> m\Big [-d_i(t)+\sum_j a_{ij}(t)+\al^{-1}\be_i(t)\Big]\ge 0.$$
From  \cite[Remark 5.2.1]{Smith},  it follows that the set $[m,\infty)^n\subset C$ is positively  invariant for \eqref{Coop}.
%

%
%
%
%

\smal

{\it Step 3}.  
For  $ T_0$ as before, define  $$s_0=\min_j\min_{t\in [T_0,T_0+\tau]} x_j(t),\q s_1=\min_j\min_{t\in[T_0+\tau,T_0+2\tau] }x_j(t).$$
Let $s_1=x_i(t_0)$, for some $t_0\in [T_0+\tau, T_0+2\tau]$ and $i\in \{1,\dots,n\}$.
We now  show that  $ s_1< m$ implies that $s_1>s_0$.  

If $s_1\le s_0$, then
 $$s_1=x_i(t_0)=\min_j\min_{t\in[T_0,T_0+2\tau] }x_j(t).$$
Assuming that $s_1\le s_0$ and $x_i(t_0)<m$,   since $x_{j,t}\ge x_i(t_0)$ for $t\in [t_0-\tau,t_0]$, we get
\begin{equation*}
\begin{split}
0\ge x_i'(t_0)
\ge &\Big (-d_i(t_0)+\sum_j a_{ij}(t_0)\Big)x_i(t_0)+\be_i(t_0)H_i(x_i(t_0))\\
=&\Big (-d_i(t_0)+\sum_j a_{ij}(t_0)\Big)x_i(t_0)+\be_i(t_0)h_i^-(x_i(t_0))\\
>&\Big (-d_i(t_0)+\sum_j a_{ij}(t_0)+\al^{-1}\be_i(t_0)\Big)x_i(t_0)\ge 0,
\end{split}
\end{equation*}
which is a contradiction.
 This shows that $s_1>s_0$ whenever $ s_1< m$. 
 
 {\it Step 4}. 
Define the  sequence 
$$  s_k=\min_j\min_{t\in[T_0+k\tau,T_0+(k+1)\tau] }x_j(t),\q k\in \N_0.$$ 

For the sake of contradiction,
assume that $s_k<m$ for all $k\in\N_0$.   Thus, reasoning as in Step 3,   $(s_k)$ is strictly increasing.
Let $t_k\in I_k:=[T_0+k\tau,T_0+(k+1)\tau]$ be such that $s_k=x_{i_k}(t_k)$, for some $i_k\in\{1,\dots,n\}.$ By jumping some of the  intervals $I_k$ and considering a subsequence of $(t_k)$, still denoted by $(t_k)$, we may consider a unique $i\in\{1,\dots,n\}$ such that $s_k=x_i(t_k)$. Denote $\ell=\lim_k s_k>0$. 


%
%

Let $\al>1$ and $\de>0$ as in \eqref{h5}. We now claim that
\begin{equation}\label{s2-claim}
s_{k+1}\ge  \al   \min_j  H_j(s_k),\q k\in \N_0.
\end{equation}
Otherwise, suppose that there is $k$  such that $s_{k+1}<\al \min_j H_j( s_k)$.

 We distinguish two situations: either there are no delays in  \eqref{Lin0}, or $a_{ij}(t)$ are all bounded in $\R^+$ -- in which case we suppose that $\al$ is chosen so that it also satisfies $1<\al<\frac{M_1}{M_1-\de},$ where
 $\max_{1\le j\le n}\sup_{t\ge 0}(\sum_j a_{ij}(t))\le M_1$. 
 
%

 First, we treat the case of no delays in the linear part ${\cal L}$ of \eqref{Lin0}. In this situation, $ L_{ij}(t)x_{j,t}= a_{ij}x_j(t)\ge  a_{ij}(t)s_{k}$ for $t\in I_{k}\, (k\in \N_0)$.
Estimate \eqref{h5'} leads to
\begin{equation*}
\begin{split}
0\ge x_i'(t_{k+1})\ge &-d_i(t_{k+1})s_{k+1}+\sum_j a_{ij}(t_{k+1})s_{k+1}+\be_i(t_{k+1})H_i(s_k)\\
>&\Big (-d_i(t_{k+1})+\sum_j a_{ij}(t_{k+1})+\al^{-1}\be_i(t_{k+1})\Big )s_{k+1}\ge 0,\\
\end{split}
\end{equation*}
which is not possible. Thus,  \eqref{s2-claim} holds.

When  delays are allowed in the linear part,  we can write  $ L_{ij}x_{j,t}\ge  a_{ij}(t)s_k$ for $t\in I_{k+1}$, and  $s_k\ge h^-_i(s_k)=H_i(s_k)> \al^{-1}s_{k+1}$, thus we have
\begin{equation*}
\begin{split}
0\ge x_i'(t_{k+1})\ge &-d_i(t_{k+1})s_{k+1}+\sum_j a_{ij}(t_{k+1})s_{k}+\be_i(t_{k+1})H_i(s_k)\\
> &\Big(\al^{-1}\be _i(t_{k+1})+\sum_j a_{ij}(t_{k+1})-d_i(t_{k+1})\Big)s_{k+1}+(s_k-s_{k+1})\sum_j a_{ij}(t_{k+1})\\
\ge & \de s_{k+1}+s_{k+1}(\al^{-1}-1)M_1=[\de +(\al^{-1}-1)M_1]s_{k+1}>0,
\end{split}
\end{equation*}
which is not possible. Thus, claim \eqref{s2-claim} is proven.

From \eqref{s2-claim}, we obtain
$m\ge \ell\ge \al \min_j H_j(\ell)=\al \min_j h^-_j(\ell)>\ell$,
which is not possible. Therefore, $s_k\ge m$ for some $k$, and the result follows by Step 2.
%
%
%
\end{proof}

\begin{rmk}\label{rmk2.5} {\rm 
For $v=(v_1,\dots,n)$ and $m>0$  as in the above proof, one concludes that any solution $x(t)=x(t,0,\phi)\ (\phi\in C_0^+)$ of   \eqref{NonLin} satisfies
$ \min_{t\ge T} x_j(t)\ge mv_j\, (1\le j\le n)$ for some $T=T(\phi)$.}
\end{rmk}

It is clear that 
assumption (H2) was used in the above proof only to derive that \eqref{NonLin} is dissipative. In the case of bounded nonlinearities, Theorem \ref{thm2.2} shows that Theorem \ref{thm2.3} is still valid if one replaces (H2) by the requirement of having  \eqref{Lin0} exponentially asymptotically stable, as stated below.

\begin{thm}\label{thm2.3'} For  \eqref{NonLin}, assume   (H1), (H3), (H4)  and that:\vskip 0mm
(i) the linear system $x_i'(t)=-d_i(t)x_i(t)+\sum_{j=1}^n L_{ij}(t)x_{j,t},\ i=1,\dots,n$,
is exponentially asymptotically stable;\vskip 0mm
 (ii) either $L_{ij}(t)\phi_j=a_{ij}(t)\phi_j(0)$  with $a_{ij}(t)\ge 0$, for all $ i,j=1,\dots,n$ and $t\ge 0$, or  $L_{ij}(t)$ are  nonnegative and  $a_{ij}(t)=\|L_{ij}(t)\|$  are bounded on $\R^+$, $i,j=1,\dots,n$;\vskip 0mm
  (ii) either (H5) is satisfied  and  $\dps\limsup_{t\to\infty}\be_i(t)<\infty$ for all $i$, or (H5*) is satisfied and $\dps\liminf_{t\to\infty}\be_i(t)>0$ for all $i$;\vskip 0mm
 (iv) $f_i(t,x)$ are bounded, $i=1,\dots,n, k=1,\dots,m_i$.\\
Then   system  \eqref{NonLin}  is permanent (in $C_0^+$).\end{thm}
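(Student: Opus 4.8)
The plan is to mirror the proof of Theorem~\ref{thm2.3} almost verbatim, the single modification being how the uniform upper bound (dissipativeness) is obtained. Writing \eqref{NonLin} as $x'(t)=F(t,x_t)$, hypothesis (H3) together with the boundedness of the $f_i$ guarantees that $F$ is continuous, locally Lipschitzian in the second variable and bounded on bounded sets, so that solutions with $\phi\in C_0^+$ are defined for all $t\ge0$; moreover the differential inequalities $x_i'(t)\ge -d_i(t)x_i(t)$ keep these solutions positive on $\R^+$.

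First I would establish dissipativeness, which is the only place where the hypotheses genuinely differ from those of Theorem~\ref{thm2.3}. In that earlier proof, (H2) served solely to invoke Theorem~\ref{thm2.1} and thereby conclude that \eqref{Lin0} is exponentially asymptotically stable. Here that stability is assumed outright in (i). Since the nonlinearities are bounded by (iv), Theorem~\ref{thm2.2}(i) applies directly---with $\mathcal L$ the linear part of \eqref{NonLin} and $f=(f_1,\dots,f_n)$---and yields a constant $M>0$ such that every solution $x(t)=x(t,0,\phi)$ with $\phi\in C_0^+$ satisfies $0<x_i(t)\le M$ for all $i$ and $t\ge T(\phi)$, with $T(\phi)$ sufficiently large. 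This supplies the uniform upper bound without any appeal to (H2) or to boundedness of the $d_i(t)$.

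It then remains to produce the uniform lower bound, and here the argument reproduces the corresponding portion of the proof of Theorem~\ref{thm2.3} without change, since that portion uses only (H1), (H3), (H4), the structural condition on the linear functionals, and the hypothesis involving (H5)/(H5*). Concretely, from (H5)/(H5*) together with Remark~\ref{rmk2.2} both hold with a common positive vector $v$, which by the rescaling of Remark~\ref{rmk2.4} may be taken equal to $\vec1$; this yields constants $\de_0>0$, $\al_0>1$ as in \eqref{h5'}--\eqref{h5}. Choosing $m>0$ so that each $h_i^-$ is strictly increasing with $\al^{-1}x<h_i^-(x)$ on $(0,m]$, one forms the truncated monotone functions $H_i$ and the auxiliary cooperative system \eqref{Coop}, which by the comparison theory of \cite{Smith} bounds \eqref{NonLin} from below. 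The invariance of $[m,\infty)^n$ for \eqref{Coop} and the monotone-sequence argument on the minima $s_k$ then run through the identical estimates and give $x_i(t)\ge m$ for $t\gg1$ and all $i$.

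Combining the lower bound $m$ with the upper bound $M$ yields permanence in $C_0^+$. The only substantive point is conceptual rather than technical, and was already flagged in the preceding remark: exponential asymptotic stability of \eqref{Lin0} enters the original proof solely through dissipativeness, so once (i) supplies that stability and (iv) supplies boundedness, Theorem~\ref{thm2.2}(i) delivers the upper bound and everything else is unchanged. I therefore expect no real obstacle beyond checking that none of the positivity- and monotonicity-based steps secretly relied on (H2); a direct inspection confirms that they depend only on (H5)/(H5*) and on the nonnegativity of the $L_{ij}(t)$.
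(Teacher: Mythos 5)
Your proposal is correct and matches the paper's own justification exactly: the paper states Theorem~\ref{thm2.3'} precisely on the grounds that (H2) entered the proof of Theorem~\ref{thm2.3} only through the exponential asymptotic stability of \eqref{Lin0} and hence the dissipativeness supplied by Theorem~\ref{thm2.2}(i), so assuming that stability directly in (i) and boundedness of the $f_i$ leaves the lower-bound argument untouched. Nothing to add.
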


On the other hand, if the nonlinear terms $f_i$ in \eqref{NonLin} are not bounded but are sublinear, a condition stronger than (H2) still gives the dissipativeness of the system.
   
   \begin{thm}\label{thm2.6} Consider  \eqref{NonLin}, under  (H1),~(H3). Suppose that there exist functions $\be_i^+,h_i^+:\R^+\to (0,\infty)$, a constant $R>0$ and a  vector $u>0$ such that, for $\phi=(\Phi_1,\dots,\phi_n)\in C_0^+$ and $i=1,\dots,n$:\vskip 0mm
 (i) $0\le f_i(t,\phi_i)\le \be_i^+(t)h_i^+(\|\phi_i\|)$ for $t\gg 1$ and $\|\phi_i\|\ge R$;\vskip 0mm
   (ii) $\limsup_{x\to\infty}\frac{h_i^+(x)}{x}<1$;\vskip 0mm
   (iii) $\Big [D^+(t)-A(t)\Big ]u\ge 0$ for $t\gg 1$, where $D^+(t)=\diag(d_1(t)-\be_1^+(t),\dots,d_n(t)-\be_n^+(t))$.\\
Then     \eqref{NonLin}  is dissipative.\end{thm}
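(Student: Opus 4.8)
The plan is to bound every solution from above by playing the strict sublinearity in (ii) against the linear dominance in (iii), carrying this out through a direct differential-inequality argument at well-chosen ``peak'' times rather than by invoking Theorem \ref{thm2.2}. The latter is unavailable here: the forcing produced by the nonlinearities need not be bounded and the coefficients $d_i(t),\beta_i^+(t),a_{ij}(t)$ may be unbounded, so the relevant augmented linear system is not covered by the hypotheses of Theorem \ref{thm2.1}.

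First I would record the preliminary reductions. As in the proof of Theorem \ref{thm2.3}, the inequalities $x_i'(t)\ge -d_i(t)x_i(t)$ keep solutions with $\phi\in C_0^+$ positive, and the at-most-linear growth in $\|\cdot\|$ coming from (i)--(ii) (together with the local boundedness of the continuous coefficients) rules out finite-time blow-up, so every such solution is defined on $\R^+$. Rescaling by the vector $u>0$ exactly as in Remark \ref{rmk2.4}, I may assume $u=\vec 1$, so that (iii) reads $d_i(t)-\beta_i^+(t)-\sum_j a_{ij}(t)\ge 0$ for $t\gg 1$. Using (ii), I fix $\theta\in(0,1)$ with $\theta>\max_i\limsup_{x\to\infty}h_i^+(x)/x$; then $h_i^+(x)-\theta x\to-\infty$, so $C_i:=\sup_{x\ge R}\big(h_i^+(x)-\theta x\big)$ is finite and, using also $L_{ij}(t)\phi_j\le a_{ij}(t)\|\phi_j\|$ for nonnegative $\phi_j$, one gets $f_i(t,\phi_i)\le \beta_i^+(t)\big(\theta\|\phi_i\|+C_i\big)$ whenever $\|\phi_i\|\ge R$ and $t\gg1$.

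The core step is a peak argument. Set $\Lambda=\limsup_{t\to\infty}\max_i x_i(t)$; since $\tau$ is fixed one checks that $\limsup_{t\to\infty}\max_i\|x_{i,t}\|=\Lambda$ as well, so for any $\varepsilon>0$ eventually $\|x_{j,t}\|\le \Lambda+\varepsilon$ for all $j$. Suppose, for contradiction, that $\Lambda>\max\{R,K\}$ with $K:=\max_i C_i/(1-\theta)$, and fix $\Lambda'$ with $\max\{R,K\}<\Lambda'<\Lambda$. I would select times $t_n\to\infty$ at which the component realizing the running maximum has a one-sided local maximum, so that for a suitable index $i_n$ one has $x_{i_n}(t_n)=\|x_{i_n,t_n}\|=:P_n>\Lambda'$ and $x_{i_n}'(t_n)\ge 0$. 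Inserting the upper bound for $f_{i_n}$ into \eqref{NonLin} at $t_n$ and using $\|x_{j,t_n}\|\le P_n+o(1)$ gives
\begin{equation*}
0\le x_{i_n}'(t_n)\le \Big(-d_{i_n}(t_n)+\sum_j a_{i_n j}(t_n)+\theta\beta_{i_n}^+(t_n)\Big)P_n+\beta_{i_n}^+(t_n)C_{i_n}+o(1).
\end{equation*}
The crucial point is that (iii) yields $d_{i_n}(t_n)-\sum_j a_{i_n j}(t_n)-\theta\beta_{i_n}^+(t_n)\ge(1-\theta)\beta_{i_n}^+(t_n)$, so the possibly unbounded factor $\beta_{i_n}^+(t_n)$ cancels and the estimate collapses to $(1-\theta)P_n\le C_{i_n}+o(1)$, i.e. $P_n\le K+o(1)$. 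This contradicts $P_n>\Lambda'>K$, whence $\Lambda\le\max\{R,K\}$; undoing the scaling produces a uniform $M$ with $\limsup_{t\to\infty}|x(t)|\le M$, which is the asserted dissipativeness.

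The main obstacle, and the reason Theorem \ref{thm2.2} cannot simply be quoted, is precisely the absence of a priori boundedness of the coefficients: the argument survives only because the forcing term $\beta_i^+(t)C_i$ and the stabilizing margin $(1-\theta)\beta_i^+(t)$ are proportional, so their quotient $C_i/(1-\theta)$ is bounded uniformly in $t$. The one genuinely technical part is the peak selection: I must produce times $t_n\to\infty$ at which a single component simultaneously attains the maximum over the delay window and has nonnegative derivative, so that all delayed terms $\|x_{j,t_n}\|$ are dominated by $P_n$ up to $o(1)$. This is the standard fluctuation/record-time construction, made slightly delicate by the window of length $\tau$ and by possible ties between components, but it leaves the estimate above intact. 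Finally, the region $\|\phi_i\|<R$, where (i) gives no information, is harmless: at a peak $P_n>\Lambda'\ge R$ forces $\|x_{i_n,t_n}\|\ge R$, so the growth bound always applies precisely at the times that matter.
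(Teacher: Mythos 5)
Your overall strategy is the same as the paper's: a direct fluctuation/peak-time argument in which condition (iii), after rescaling so that $u=\vec 1$, is used to cancel the possibly unbounded coefficients, rather than an appeal to Theorem \ref{thm2.2} (which indeed does not apply here). The affine splitting $h_i^+(x)\le\theta x+C_i$ is a harmless variant of the paper's simpler choice of $R_0\ge R$ with $h_i^+(x)<x$ for $x\ge R_0$. However, there is a genuine gap in your key estimate. Your peak times $t_n$ control only the peaking component's own history, $x_{i_n}(t_n)=\|x_{i_n,t_n}\|=P_n$, while the other components are controlled only through $\|x_{j,t_n}\|\le P_n+\delta_n$ with $\delta_n\to 0$. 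Substituting this into the equation produces the error term $\delta_n\sum_j a_{i_nj}(t_n)$, which you absorb into ``$+o(1)$''. That is not justified: the functions $a_{ij}(t)$ (and $d_i(t)$, $\beta_i^+(t)$) may be unbounded here -- this is precisely the regime the theorem is designed for -- so $\delta_n\sum_j a_{i_nj}(t_n)$ need not tend to zero. For instance, with $d_i(t)=t^2$, $\beta_i^+(t)\equiv 1$, $\sum_j a_{ij}(t)=t^2-1$ (so (iii) holds), $\delta_n=1/n$ and $t_n=n$, the error term grows like $n$ and no contradiction is obtained.

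The repair is exactly the peak selection the paper uses: choose $t_k\to\infty$ and an index $i$ with $x_i(t_k)=\|x_{t_k}\|>R_0$ and $x_i'(t_k)\ge 0$, i.e.\ the \emph{full} segment norm over all components is attained by $x_i$ at the right endpoint. Then $\|x_{j,t_k}\|\le x_i(t_k)$ for every $j$ with no slack, $\|x_{i,t_k}\|=x_i(t_k)\ge R_0$, and
\begin{equation*}
0\le x_i'(t_k)\le x_i(t_k)\Bigl(-d_i(t_k)+\sum_j a_{ij}(t_k)\Bigr)+\beta_i^+(t_k)\,h_i^+\bigl(x_i(t_k)\bigr)
< x_i(t_k)\Bigl(-d_i(t_k)+\sum_j a_{ij}(t_k)+\beta_i^+(t_k)\Bigr)\le 0,
\end{equation*}
an immediate contradiction with no $o(1)$ terms. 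Such times exist by the standard record-time construction (first hitting times of increasing levels if the solution is unbounded; near-supremum one-sided local maxima of $\max_i x_i$ otherwise); this is the one point requiring care, and it replaces, rather than coexists with, your $\varepsilon$-slack. With that modification your argument coincides with the paper's proof.
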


\begin{proof} Once again, after a scaling, we may consider that the positive vector $u$ in (iii) is the unit vector $\vec 1$. From (ii), take $R_0\ge R$ such that $h_i^+(x)< x$ for $x\ge R_0$. Let $x(t)$ be a  solution with initial contition in $C_0^+$. We claim that $\limsup_{t\to\infty}|x(t)|\le R_0$. Otherwise, there is $i$ and a sequence $t_k\to\infty$ such that $\|x_{t_k}\|=x_i(t_k)>R_0$ and $x_i'(t_k)\ge 0$. This would imply
\begin{equation*}
\begin{split}
x_i'(t_k)&\le-d_i(t_k)x_i(t_k)+\sum_j a_{ij}(t_k)\|x_{j,t_k}\|+\be_i^+(t_k)h_i^+(\|x_{i,t_k}\|)\\
&<-x_i(t_k)\big[d_i(t_k)-\be_i^+(t_k)+\sum_j a_{ij}(t_k)\big ]\le 0,
\end{split}
\end {equation*}
which is not possible.
\end{proof}

The  previous arguments also allows us to derive sufficient conditions for the uniform persistence of  \eqref{NonLin}  without requiring that the system is dissipative, nor that the coefficients $\be_i(t)$ are bounded. 

\begin{thm}\label{thm2.4} For  \eqref{NonLin}, assume  (H1), (H3), (H4)  and (H5*), and  the following conditions:\vskip 0mm
 (i) either $L_{ij}(t)\phi_j=a_{ij}(t)\phi_j(0)$  with $a_{ij}(t)\ge 0$, for all $ i,j=1,\dots,n$ and $t\ge 0$, or  $L_{ij}(t)$ are  nonnegative and  $a_{ij}(t)=\|L_{ij}(t)\|$  are bounded on $\R^+$, $i,j=1,\dots,n$;\vskip 0mm
   (ii) $\liminf_{x\to\infty}\be_i(t)>0$ and $\liminf_{x\to\infty}h_i^-(x)>0$  for $h_i^-(x)$ as in (H4), $i=1,\dots,n$.\\
Then   system  \eqref{NonLin}  is uniformly persistent.\end{thm}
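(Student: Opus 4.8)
The plan is to adapt the lower-comparison argument from the proof of Theorem \ref{thm2.3} (its Steps 1--4), observing that \emph{the upper bound $M$ furnished there by dissipativeness is used only to build a suitable nondecreasing minorant $H_i$ of the nonlinearity}; everything after that construction establishes only lower bounds on the solutions and never invokes $M$ again. Consequently the single new ingredient required is a replacement for that construction that avoids dissipativeness, and this is precisely what the extra hypothesis $\liminf_{x\to\infty}h_i^-(x)>0$ in (ii) supplies.

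First I would carry out the reductions already present in the proof of Theorem \ref{thm2.3}: rescale via Remark \ref{rmk2.4} so that $v=\vec 1$, and combine (H5*) with $\liminf_{t\to\infty}\be_i(t)>0$ to produce constants $\al>1$, $\de>0$ and $T_0\ge0$ with
\begin{equation*}
\be_i(t)\ge \al\Big(d_i(t)-\sum_j a_{ij}(t)\Big),\qquad \al^{-1}\be_i(t)\ge \Big(d_i(t)-\sum_j a_{ij}(t)\Big)+\de,\qquad t\ge T_0,
\end{equation*}
for $i=1,\dots,n$; the uniform margin $\de$ is exactly where $\liminf_t\be_i(t)>0$ enters, as in Remark \ref{rmk2.2}. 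The solutions with $\phi\in C_0^+$ are defined on $\R^+$ (standing assumption) and positive, since $x_i'(t)\ge -d_i(t)x_i(t)$.

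The key step is the construction of $H_i$ without an a priori upper bound. For each $i$, set $G_i(x)=\inf_{y\ge x}h_i^-(y)$, which is nondecreasing and satisfies $G_i(x)\le h_i^-(x)$. I would first check that $G_i(x)>0$ for every $x>0$: on $[x,\infty)$ the infimum of $h_i^-$ equals the smaller of its minimum over a compact sub-interval (positive, by continuity and positivity of $h_i^-$ on $(0,\infty)$) and a positive lower bound for the tail (from $\liminf_{y\to\infty}h_i^->0$), hence is positive. Next, since $(h_i^-)'(0+)=1$ the function $h_i^-$ is increasing near $0$ with $h_i^-(x)/x\to1$, so for $x$ small one has $G_i(x)=h_i^-(x)>\al^{-1}x$. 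Then define $H_i(x)=\min\{x,G_i(x)\}$, which is nondecreasing, positive on $(0,\infty)$, satisfies $H_i(x)\le\min\{x,h_i^-(x)\}$, and obeys $H_i(x)>\al^{-1}x$ on some interval $(0,m]$. Enlarging $T_0$ so that (H4) holds for $t\ge T_0$, we get $f_i(t,\phi_i)\ge\be_i(t)h_i^-(\ul{\phi_i})\ge\be_i(t)H_i(\ul{\phi_i})$, so \eqref{NonLin} dominates from below the cooperative system \eqref{Coop} built with this $H_i$, and $x(t,T_0,\phi,F)\ge x(t,T_0,\phi,G)$ exactly as in Theorem \ref{thm2.3}.

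Finally I would invoke Steps 2--4 of the proof of Theorem \ref{thm2.3} \emph{verbatim}: the invariance of $[m,\infty)^n$ for \eqref{Coop}, together with the sequence $s_k=\min_j\min_{I_k}x_j$ and the claim $s_{k+1}\ge\al\min_j H_j(s_k)$, which forces the contradiction $m\ge\ell\ge\al\min_j H_j(\ell)>\ell$. These arguments only ever evaluate $H_i$ at points $<m$, where $H_i(x)>\al^{-1}x$ and $H_i(x)\le x$, so they are insensitive to the behaviour of $H_i$ for large arguments and require no upper bound on the solutions. This gives $x_i(t)\ge m$ for all large $t$ and all $i$, i.e. uniform persistence. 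The only genuine obstacle is the $H_i$ construction: one must ensure that $\inf_{y\ge x}h_i^-$ stays positive and agrees with $h_i^-$ near $0$, which is guaranteed precisely by combining $(h_i^-)'(0+)=1$ with $\liminf_{x\to\infty}h_i^-(x)>0$; dissipativeness, and with it any upper bound $M$, is never needed.
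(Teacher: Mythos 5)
Your proposal is correct and follows essentially the same route as the paper: the paper's proof of Theorem \ref{thm2.4} likewise reuses Steps 2--4 of Theorem \ref{thm2.3} verbatim and only replaces the dissipativeness-based truncation of $h_i^-$ by one built from $\liminf_{x\to\infty}h_i^-(x)>0$ (taking $M=\inf_{x\ge m}h_i^-(x)>0$ and shrinking $m$ so that $h_i^-(m)\le M$), while invoking Remark \ref{rmk2.2} to recover (H5) from (H5*) and $\liminf_{t\to\infty}\be_i(t)>0$. Your explicit minorant $H_i(x)=\min\{x,\inf_{y\ge x}h_i^-(y)\}$ is only a cosmetic variant of the paper's construction, and all the properties you need of it ($H_i$ nondecreasing, $H_i\le\min\{x,h_i^-\}$, $H_i(x)>\al^{-1}x$ on $(0,m]$) do hold.
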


\begin{proof} With the notations in the above proof,  choose $m>0$ such that $h_i^-(x)$ is strictly increasing on $(0,m]$
 and  $\al^{-1}x<h_i^-(x)$ for  all $i$ and $x\in (0,m]$.
From (H4) and (ii), there exists $M>0$ such that $h_i^-(x)\ge M$ for all $i$ and $t\ge m$. If necessary, find $m_0\in (0,m)$, still denoted by $m$, such that $h_i^-(m)\le M$  and take
   $H_i(x)$ as  in \eqref{Coop}. 
Since $\liminf_{x\to\infty}\be_i(t)>0$, then (H5) also holds (conf.~Remark \ref{rmk2.2}).  
 The conclusion follows as in Theorem \ref{thm2.3}.\end{proof}

When the linearities do not have delays,  the above proof  only requires  the use of assumption (H5*) to show the uniform persistence, but not of (H5). This observation and Theorem \ref{thm2.1}(iii) allow us to conclude the following:
\begin{cor}\label{cor2.1} For  
\begin{equation}\label{NonLinFOS}
x_i'(t)=-d_i(t)x_i(t)+\sum_{j=1}^n a_{ij}(t)x_j(t)+f_i(t,x_{i,t}), \ i=1,\dots,n,
\end{equation}
assume that  (H1), (H3), (H4)  and (H5*) are satisfied, with $a_{ij}(t)\ge 0$  on $\R^+$.\\
(a) If (H2) holds and
 $f_i(t,x)$ are bounded, for all $i,k$,
 then     \eqref{NonLinFOS}  is permanent.\\
(b) If  $\liminf_{x\to\infty}h_i^-(x)>0$ for $h_i^-(x)$ as in (H4),  for all $i$,
   then  \eqref{NonLinFOS}  is uniformly persistent.\end{cor}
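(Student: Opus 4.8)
The plan is to read both claims off the proofs of Theorems \ref{thm2.3} and \ref{thm2.4}, exploiting the fact noted just above: when the linear part of \eqref{NonLinFOS} carries no delay, the persistence argument never invokes (H5) nor any boundedness of the $\be_i(t)$, and only (H5*) is used. As in those proofs I would first rescale by the positive vector $v$ of (H5*) (Remark \ref{rmk2.4}) so that (H5*), and when available (H5), hold with $v=\vec 1$; this reduces matters to the componentwise inequality $\be_i(t)\ge \al_0\big(d_i(t)-\sum_j a_{ij}(t)\big)$ for $t\ge T_0$ as in \eqref{h5'}, and I would set $\al=(1+\al_0)/2>1$. Positivity of solutions starting in $C_0^+$ is immediate from $x_i'(t)\ge -d_i(t)x_i(t)$, exactly as in Step~1 of Theorem \ref{thm2.3}.

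For part (a) I would split permanence into its two halves. For the upper bound, observe that \eqref{NonLinFOS} has the ordinary-differential form of Theorem \ref{thm2.1}(iii): absorbing the instantaneous self-coupling into the diagonal by $\tilde d_i(t):=d_i(t)-a_{ii}(t)$ --- which is positive since (H2) forces $(d_i-a_{ii})v_i\ge \de v_i+\sum_{j\ne i}a_{ij}v_j>0$ for the positive vector of (H2) --- rewrites the linear part as $x_i'=-\tilde d_i x_i+\sum_{j\ne i}a_{ij}x_j$, for which (H2) holds verbatim with $a_{ij}=|a_{ij}|$. Since exponential asymptotic stability is coordinate-free, Theorem \ref{thm2.1}(iii) then yields it for the linear system \emph{without} any boundedness of the $a_{ij}(t)$ or $d_i(t)$, and because the $f_i$ are bounded Theorem \ref{thm2.2}(i) gives dissipativeness, i.e.\ a uniform $M>0$ with $\limsup_{t}x_i(t)\le M$. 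For the lower bound I would run Steps~2--4 of the proof of Theorem \ref{thm2.3} in the no-delay branch: using $M$ to cap $h_i^-$ through $h_i^-(m)=\min_{x\in[m,M]}h_i^-(x)$, the closing contradiction rests solely on $-d_i(t)+\sum_j a_{ij}(t)+\al^{-1}\be_i(t)\ge 0$, i.e.\ on (H5*). Comparison from below with the cooperative system \eqref{Coop} then furnishes the uniform lower bound $m>0$, and permanence follows.

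For part (b) I would discard the dissipativeness step, no longer available since the $f_i$ need not be bounded, and instead adapt the proof of Theorem \ref{thm2.4}. The only place an upper bound on solutions entered in part (a) was the capping of $h_i^-$; here I replace it by $\liminf_{x\to\infty}h_i^-(x)>0$. Concretely, using that each $h_i^-$ is continuous, positive on $(0,\infty)$ with $h_i^-(0+)=0$ and $\liminf_{x\to\infty}h_i^->0$, I would choose $m>0$ so small that $h_i^-$ is strictly increasing with $\al^{-1}x<h_i^-(x)$ on $(0,m]$ and, shrinking $m$ further, so that $h_i^-(m)\le \inf_{x\ge m}h_i^-(x)$. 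Then the nondecreasing truncation $H_i$ equal to $h_i^-$ on $[0,m]$ and to $h_i^-(m)$ on $[m,\infty)$ satisfies $H_i\le h_i^-$ pointwise (and, as in \eqref{Coop}, can be taken $\le x$), so that $\be_i(t)H_i\big(\min_{-\tau\le s\le 0}x_i(t+s)\big)\le f_i(t,x_{i,t})$ for every solution and \eqref{NonLinFOS} dominates \eqref{Coop} from below. Steps~2--4, again in the no-delay branch using (H5*) only, then give $x_i(t)\ge m$ for $t\gg 1$, i.e.\ uniform persistence.

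I expect the only delicate points to be, first, checking that the term $(s_k-s_{k+1})\sum_j a_{ij}(t_{k+1})$ which forced the use of (H5) in the delayed branch of Step~4 genuinely vanishes in the no-delay branch, so that the contradiction $0\ge x_i'(t_{k+1})>0$ is driven by (H5*) alone; and second, justifying in part (b) that $m$ can be chosen with $h_i^-(m)\le\inf_{x\ge m}h_i^-$, which is what makes $H_i\le h_i^-$ and legitimizes the lower comparison with no a priori bound on the solutions. Granting these, the terminal passage to the limit $m\ge \ell\ge \al\min_j h_j^-(\ell)>\ell$ goes through verbatim, and no boundedness of $a_{ij}(t)$, $d_i(t)$ or $\be_i(t)$ is needed anywhere.
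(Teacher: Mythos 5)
Your proposal is correct and follows exactly the route the paper intends: the corollary is stated as a consequence of the observation that, with no delays in the linear part, Steps 2--4 of the proof of Theorem \ref{thm2.3} (and the adaptation in Theorem \ref{thm2.4}) use only (H5*), combined with Theorem \ref{thm2.1}(iii) and Theorem \ref{thm2.2} for dissipativeness in part (a). The two ``delicate points'' you flag are handled the same way in the paper's proofs (the $(s_k-s_{k+1})\sum_j a_{ij}$ term indeed does not arise when $L_{ij}(t)\phi_j=a_{ij}(t)\phi_j(0)$, and the choice of $m$ with $h_i^-(m)\le\inf_{x\ge m}h_i^-(x)$ is exactly the capping used in the proof of Theorem \ref{thm2.4}), so no gap remains.
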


   We end this section with two remarks, leading to  more precise and general results.

\begin{rmk}\label{rmk2.0} {\rm More explicitly,   we could have written the linear DDE \eqref{Lin0}  as
\begin{equation*}
\begin{split}
x_i'(t)&=-d_i(t)x_i(t)+\sum_{j\ne i} d_{ij}(t)x_j(t)+\sum_{j=1}^n a_{ij}(t) \int_{-\tau}^0 x_j(t+s) d_s\nu_{ij}(t,s),\ 1\le i\le n,
\end{split}
\end{equation*}
with $d_i(t)>0, d_{ij}(t)\ge 0\, (j\ne i)$ and $a_{ij}(t),\nu_{ij}(t,s)$ as above, with $s\mapsto \nu_{ij}(t,s)$ non atomic at zero, and apply more precise criteria for its exponential asymptotic stability, see  \cite{Faria20}. Namely, the criteria in Theorem \ref{thm2.1} hold with the matrix $D(t)=\diag \,  (d_1(t),\dots,d_n(t))$ replaced by $\tilde D(t)=[\tilde d_{ij}(t)]$, where $\tilde d_{i}(t)=d_i(t)$ and $\tilde d_{ij}(t)=-d_{ij}(t)$ for $j\ne i$. Naturally, in this case,  the condition $\liminf_{t\to\infty}d_{i}(t)>0$  in (ii) of  Theorem \ref{thm2.1} should be replaced by
$\liminf_{t\to\infty}(d_{i}(t)v_i-\sum_jd_{ij}(t)v_j)>0$, for all $i$.  This means that the criterion for permanence in Theorem \ref{thm2.3} remains valid with these changes.}\end{rmk}

\begin{rmk}\label{rmk2.6} {\rm Consider nonlinearites   which also incorporate a strictly sublinear negative feedback term of the form $-K_i(t,x_i(t))$, so that  \eqref{NonLin} reads as
\begin{equation}\label{NonLin1}
\begin{split}
x_i'(t)&=-d_i(t)x_i(t)+\sum_{j=1}^nL_{ij}(t)x_{j,t}+f_i(t,x_{i,t})-K_i(t,x_i(t)), \ t\ge 0,  i=1,\dots,n,
\end{split}
\end{equation}
where $K_i(t,x)\ge 0$ are continuous and
$K_i(t,x)\le \ka_i(t)g_i(x)$
for some continuous functions $\ka_i,g_i:\R^+\to \R^+$ with $\ka_i(t)$ bounded, $g_i(0)=0$ and with right-hand derivative $(g_i)'(0+)=0$. With $f_i$ bounded functions,
solutions of \eqref{NonLin1} satisfy the inequalities  $-d_i(t)x_i(t)-K_i(t,x_i(t))\le x_i'(t)\le -d_i(t)x_i(t)+\sum_{j=1}^n L_{ij}(t)x_{j,t}+C\, (1\le i\le n)$, where $C>0$ is such that $f_i(t,\phi)\le C$ on $\R^+\times C^+$ for all $i$. By comparing  below and  above the solutions of \eqref{NonLin1} with solutions of cooperative systems and from Theorem \ref{thm2.2}, it follows  \eqref{NonLin1} is dissipative and  that $C^+_0$ is forward invariant for \eqref{NonLin1}.
On the other hand, for any fixed $\vare>0$ small, there is $m_0>0$ such that $0\le K_i(t,x)\le \vare x$ for $x\in [0,m_0]$.  
A careful analysis  shows that  the arguments in the proof of Theorem \ref{thm2.3} carry over to \eqref{NonLin1} if one chooses $\vare \in (0,\de)$, for $\de>0$ as in (H5), so that \eqref{h5} is satisfied with $d_i(t)$ replaced by $d_i(t)+\vare$. In this way, one may conclude
that the permanence results stated in Theorems \ref{thm2.3},  \ref{thm2.3'} and Corollary \ref{cor2.1} are still  valid for \eqref{NonLin1}. This more general framework allows in particular to consider structured models with harvesting.}
\end{rmk}

\section{Applications and examples}
\setcounter{equation}{0}

We now apply our results to generalized Nicholson  and Mackey-Glass systems. The literature on generalized Nicholson  and Mackey-Glass models  is very extensive, here we only mention a few selected references dealing  with the persistence and permanence  for either scalar or multidimensional Nicholson equations  \cite{BIT,FOS,FariaRost,Liu10,ObayaSanz} and  Mackey-Glass equations \cite{BB17,BBI,FOS}, and references therein.

Consider systems given by
 \begin{equation}\label{N}
\begin{split}
x_i'(t)&=-d_i(t)x_i(t)+\sum_{j=1}^n L_{ij}(t)x_{j,t} \\
&+\sum_{k=1}^{m_i} b_{ik}(t)  \int_{t-\tau_{ik}(t)}^t\!\!  \la_{ik}(s)g_{ik}(s,x_i(s))\, ds,\ i=1,\dots,n,
\end{split}
\end{equation}
where $L_{ij}(t)x_{j,t} =a_{ij}(t) \int_{-\sigma_i(t)}^0 x_j(t+s)\, d_s\nu_{ij}(t,s)$ are as in \eqref{Lij}  with $ \int_{-\sigma_i(t)}^0 d_s\nu_{ij}(t,s)=1$ and either
\begin{equation}\label{N-MG1}
 g_{ik}(t,x)=x\e^{-c_{ik}(s)x}
 \end{equation}
 or \begin{equation}\label{N-MG2}
  g_{ik}(t,x)=\frac{x}{1+c_{ik}(t)x^{\al_i}}\q (\al_i\ge 1).
 \end{equation}
 The functions $d_i(t),a_{ij}(t),b_{ik}(t),\sigma_i(t),\tau_{ik}(t)$, $\la_{ik}(t),c_{ik}(t)$ are assumed to be continuous and nonnegative, with $\sigma_i(t),\tau_{ik}(t)\in [0,\tau]$ (for some $\tau>0$), and $d_i(t)>0, c_{ik}(t)>0$, 
 for all $i,j,k$ and $t\ge 0$.
For $g_{ik}$ as in \eqref{N-MG1} a modified
  Nicholson-type system is obtained,  whereas the choice \eqref{N-MG2} provides a  Mackey-Glass-type system.
We suppose  that the linear operators  $L_{ij}(t)$ are  {\it nonnegative}, thus $a_{ij}(t)=\| L_{ij}(t)\|$ as before, and define
$$\be_i(t):=\sum_{k=1}^{m_i} b_{ik}(t)\int_{t-\tau_{ik}(t)}^t\!\!  \la_{ik}(s)\, ds>0,\q t\ge 0\q i=1,\dots,n.$$

From Theorem \ref{thm2.3}, we  derive sufficient conditions for the permanence of \eqref{N}.

  \begin{thm}\label{thm3.1} For  \eqref{N} under the general conditions above, let the matrices $D(t),A(t),M(t)$ be as in \eqref{D&A},  \eqref{M(t)},  and assume that: \vskip 0mm
  (i) the functions  $a_{ij}(t),c_{ik}(t),\be_i(t)$ are bounded  on $\R^+$ for all $ i,j=1,\dots,n,k=1,\dots,m_i$;\vskip 0mm
 (ii)  there are positive vectors $u,v$ and  $\de >0$ such that
$[D(t)- A(t)-\de I]u\ge 0,\ [M(t)-\de I]v\ge 0.$\\
Then   system \eqref{N} is permanent.\end{thm}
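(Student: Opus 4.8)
The plan is to recognize \eqref{N} as a special case of the abstract system \eqref{NonLin} and then verify, one by one, the hypotheses of Theorem \ref{thm2.3}, which is invoked as a black box. After the substitution $s\mapsto t+s$ the birth term reads
$$f_i(t,\phi_i)=\sum_{k=1}^{m_i}b_{ik}(t)\int_{-\tau_{ik}(t)}^0\la_{ik}(t+s)\,g_{ik}(t+s,\phi_i(s))\,ds,$$
so $f_i$ depends only on $t$ and on the $i$-th component $\phi_i$, exactly as required in \eqref{NonLin}; the linear part is already written in the form \eqref{Lij} with nonnegative $L_{ij}(t)$ and $a_{ij}(t)=\|L_{ij}(t)\|$. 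The spectral conditions are then read off directly: assumption (ii) of the present theorem is nothing but (H2) (with the vector $u$) together with (H5) (with the vector $v$), sharing the common $\de>0$. Via Theorem \ref{thm2.1}(i) the pair (H2) plus boundedness of the $a_{ij}(t)$ makes \eqref{Lin0} exponentially asymptotically stable, and (H5) with $\dps\limsup_{t\to\infty}\be_i(t)<\infty$ supplies the persistence mechanism of Theorem \ref{thm2.3}(ii).

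The routine checks come next. Hypothesis (H1) is the standing continuity assumption on $d_i,a_{ij},\la_{ik}$ and on the kernels $\nu_{ij}$. For (H3) I would observe that in both \eqref{N-MG1} and \eqref{N-MG2} the map $x\mapsto g_{ik}(t,x)$ is $C^1$ with locally bounded derivative, so the integral over the compact interval $[-\tau_{ik}(t),0]$ is completely continuous and locally Lipschitz in $\phi_i$, and each $f_i$ inherits these properties. Since $a_{ij}(t)$ and $\be_i(t)$ are bounded by (i), condition (i) of Theorem \ref{thm2.3} (second alternative, delays allowed but $a_{ij}$ bounded) and the first alternative of (ii) both hold.

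Two points require the explicit form of $g_{ik}$. For the lower estimate (H4), set $\bar c=\max_{i,k}\sup_{t\ge0}c_{ik}(t)<\infty$ and use $c_{ik}(t)\le\bar c$ to obtain, for every $k$, the pointwise bounds $g_{ik}(t,x)\ge x\e^{-\bar c x}$ in the Nicholson case \eqref{N-MG1} and $g_{ik}(t,x)\ge \frac{x}{1+\bar c x^{\al_i}}$ in the Mackey--Glass case \eqref{N-MG2}; take $h_i^-$ to be this common envelope and check $h_i^-(0)=0$, $h_i^-(x)>0$ for $x>0$ and $(h_i^-)'(0+)=1$, the normalization being matched precisely by the definition of $\be_i(t)$. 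For the boundedness demanded in (iii), use the uniform bounds $g_{ik}(t,x)\le\tfrac1{\e\,c_{ik}(t)}$ (Nicholson) and the analogous finite supremum for \eqref{N-MG2}, together with the control on the coefficients $c_{ik}$ and $\be_i$, so that each $f_i$ is bounded on $\R^+\times C^+$. With (H1)--(H5) and (i)--(iii) all in force, Theorem \ref{thm2.3} delivers permanence.

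The delicate step is (H4). Because each $g_{ik}(t,\cdot)$ is unimodal rather than monotone, the inequality $\phi_i(s)\ge\ul{\phi_i}$ does not transfer to $h_i^-(\phi_i(s))\ge h_i^-(\ul{\phi_i})$, and once $\phi_i$ takes large values the naive pointwise estimate breaks down; in fact the lower bound cannot hold uniformly over all of $C^+$. The resolution is exactly the mechanism already built into the proof of Theorem \ref{thm2.3}: dissipativeness (from boundedness of $f_i$ and exponential stability of \eqref{Lin0}, via Theorem \ref{thm2.2}) first confines the relevant solutions to $\{0<x_i\le M\}$, and the comparison is then run with the truncated nondecreasing function $H_i$, which coincides with $h_i^-$ on $(0,m]$ — where $h_i^-$ is increasing — and equals $\min_{[m,M]}h_i^-$ beyond $m$. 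Thus (H4) is needed only in this effective form: the increasing part of the envelope near the origin, with the dissipative bound $M$ taming the far field. This interplay between the nonmonotonicity of the Nicholson/Mackey--Glass nonlinearities and the boundedness of $f_i$ is the genuine crux of the argument; the remaining verifications are mechanical.
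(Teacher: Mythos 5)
Your proposal is correct and follows essentially the same route as the paper: cast \eqref{N} as an instance of \eqref{NonLin}, read off (H2) and (H5) from hypothesis (ii), verify (H1), (H3) and the boundedness of $f_i$ from the explicit Nicholson/Mackey--Glass forms, and handle the non-monotonicity of $g_{ik}$ exactly as the paper does, by first invoking dissipativity (Theorem \ref{thm2.2}) to confine solutions to $[0,M]$ and then replacing the unimodal envelope by its truncation $h_i^-$, increasing on $[0,m]$ with $h_i(m)\le h_i(M)$. Your closing paragraph correctly identifies and resolves the only delicate point, so nothing further is needed.
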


\begin{proof} Clearly, system \eqref{N} has the form \eqref{NonLin}, with
 $f_i(t,\phi_i)=\sum_{k=1}^{m_i} b_{ik}(t)  \int_{-\tau_{ik}(t)}^0\!\!  \la_{ik}(t+s)g_{ik}(t+s,\phi_i(s))\, ds$. Since $g_{ik}$ and $\be_i(t)$ are bounded, from Theorem \ref{thm2.2} the system is dissipative. Choose $M>0$ such that
 $0<x_i(t)\le M,\, t\in\R^+, i=1,\dots,n$, for any solution $x(t)$ with initial condition in $C_0^+$.

Let
$0<c_{ik}(t)\le \ol{c_i} $ for $t\ge 0$
and all $i,k$.  Define $h_i(x):=x\e^{- \ol{c_i}x}$, respectively $h_i(x):=\frac{x}{1+\ol{c_i}x^{\al_i}}$, for  Nicholson, respectively Machey-Glass systems. We always have $h_i'(0)=1$. Moreover, the functions $h_i$ are unimodal with  $h_i(\infty)=0$, with the exception of the increasing and bounded function $\frac{x}{1+\ol{c_i}x}$ (when $\al_i=1$ in \eqref{N-MG2}). Now, we reason as in the proof of Theorem \ref{thm2.3}. Choose $m>0$ such that all the functions $h_i(x)$ are increasing in $[0,m]$ and $h_i(m)\le h_i(M)$, and define  $h_i^-(x):=\begin{cases} h_i(x),\ 0\le x\le m\\ h_i(m),\ x>m\end{cases}$.
 The above conditions imply that hypotheses  (H1), (H3), (H4) are satisfied, for $\be_i(t)$ as above and these choices  of $h_i^-(x)$. From (iii), (H2) and (H5) are satisfied. The result follows from Theorem \ref{thm2.3}.
\end{proof}

 \begin{rmk}\label{rmk3.1} {\rm  As mentioned previously, Theorems \ref{thm2.3} and \ref{thm3.1} are still valid if one replaces (H2)   by the assumptions (H2*) and $\liminf_{t\to\infty}d_{i}(t)>0$, for all $i$.}
\end{rmk}

For the situation without delays in the linear part, from Corollary \ref{cor2.1} we obtain: 

\begin{cor}\label{cor3.0} For $g_{ik}$ as in \eqref{N-MG1} or \eqref{N-MG2}, consider the system
\begin{equation}\label{NFOS}
\begin{split}
x_i'(t)&=-d_i(t)x_i(t)+\sum_{j=1}^n a_{ij}(t)x_j(t)
+\sum_{k=1}^{m_i} b_{ik}(t)  \int_{t-\tau_{ik}(t)}^t\!\!  \la_{ik}(s)g_{ik}(s,x_i(s))\, ds, \ i=1,\dots,n,
\end{split}
\end{equation}
under the above conditions on the coefficients and delays,  and assume that: \vskip 0mm
(i) the functions  $c_{ik}(t),\be_i(t)$ are bounded  on $\R^+$  for all $ i=1,\dots,n,k=1,\dots,m_i$;\vskip 0mm
 (ii)  there are positive vectors $u,v$ and  $\de >0,\al>1$ such that $[D(t)-A(t)-\de I]u\ge 0,B(t)v\ge \al [D(t)-A(t)]v.$\\
 Then     \eqref{NFOS}  is permanent.\end{cor}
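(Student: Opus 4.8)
The plan is to exhibit \eqref{NFOS} as the undelayed instance of \eqref{NonLinFOS}, with $a_{ij}(t)\ge 0$ and
$$f_i(t,\phi_i)=\sum_{k=1}^{m_i} b_{ik}(t)\int_{-\tau_{ik}(t)}^0 \la_{ik}(t+s)\,g_{ik}(t+s,\phi_i(s))\,ds,$$
and then to verify the hypotheses of Corollary \ref{cor2.1}(a) rather than those of Theorem \ref{thm2.3}. The regularity assumptions are dispatched first: (H1) is immediate from the continuity of $d_i$ and $a_{ij}$, and (H3) holds because $g_{ik}(t,\cdot)$ is smooth while $b_{ik},\la_{ik},\tau_{ik}$ are continuous, so $f_i$ is completely continuous and locally Lipschitz in $\phi_i$. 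As in the proof of Theorem \ref{thm3.1}, boundedness of the $g_{ik}$ together with the boundedness of $\be_i$ in (i) makes $f_i$ bounded, whence Theorem \ref{thm2.2}(i) yields dissipativeness and a constant $M>0$ with $0<x_i(t)\le M$ for $t\gg 1$ along any solution starting in $C_0^+$.

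The substantive step is the construction of the minorant functions for (H4), which I would carry out verbatim from Theorem \ref{thm3.1}. Choosing $\ol{c_i}$ with $0<c_{ik}(t)\le\ol{c_i}$, set $h_i(x)=x\e^{-\ol{c_i}x}$ in case \eqref{N-MG1} and $h_i(x)=\dfrac{x}{1+\ol{c_i}x^{\al_i}}$ in case \eqref{N-MG2}; each has $h_i'(0)=1$ and, because $c_{ik}(t)\le\ol{c_i}$, satisfies $g_{ik}(t,x)\ge h_i(x)$ for $x\ge 0$. Since the $h_i$ are unimodal (or increasing and bounded when $\al_i=1$), I would fix $m>0$ so that each $h_i$ is strictly increasing on $[0,m]$ with $h_i(m)\le h_i(M)$, and set the nondecreasing truncation $h_i^-(x)=h_i(x)$ on $[0,m]$ and $h_i^-(x)=h_i(m)$ on $(m,\infty)$. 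On the dissipative range $[0,M]$ monotonicity then gives $g_{ik}(t,x)\ge h_i^-(x)\ge h_i^-(\ul{\phi_i})$, and integrating against $\la_{ik}$ and summing over $k$ yields $f_i(t,\phi_i)\ge\be_i(t)\,h_i^-(\ul{\phi_i})$ with $(h_i^-)'(0+)=1$, which is precisely (H4).

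Finally, hypothesis (ii) supplies the two structural conditions directly: $[D(t)-A(t)-\de I]u\ge 0$ is (H2) with vector $u$, and $B(t)v\ge\al[D(t)-A(t)]v$ is (H5*) with vector $v$. The point is that the absence of delays in the linear part is exactly what permits these to hold with possibly different vectors and dispenses with (H5); this is the content of Corollary \ref{cor2.1}(a), whose hypotheses (H1), (H3), (H4), (H5*), (H2) and boundedness of $f_i$ are now all in force, so permanence of \eqref{NFOS} follows.

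I expect the only real obstacle to be the ordering and scope of the (H4) estimate: the minorization $g_{ik}\ge h_i^-$ is valid only on the a priori interval $[0,M]$, so dissipativeness must be secured first, and one must check that the unimodality of $h_i$ does not spoil the monotonicity of the truncation (this is where the choice $h_i(m)\le h_i(M)$ enters). Everything else is a routine transcription of the argument in Theorem \ref{thm3.1}, the single conceptual change being the replacement of Theorem \ref{thm2.3} (which needs the summed condition (H5)) by Corollary \ref{cor2.1}(a) (which needs only (H5*)).
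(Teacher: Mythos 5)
Your proposal is correct and follows exactly the route the paper intends: the paper derives Corollary \ref{cor3.0} by verifying (H1), (H3), (H4) precisely as in the proof of Theorem \ref{thm3.1} (same construction of $\ol{c_i}$, $h_i$, the truncation $h_i^-$ on the dissipative range $[0,M]$, and the boundedness of $f_i$ from (i)), and then invoking Corollary \ref{cor2.1}(a) with (H2) read off from the vector $u$ and (H5*) from the vector $v$ in (ii). Your closing observations -- that the minorization only needs to hold on $[0,M]$ once dissipativeness is secured, and that the no-delay linear part is what lets (H5*) replace (H5) -- match the paper's Remark after Theorem \ref{thm2.4} and the discussion preceding Corollary \ref{cor2.1}.
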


We  emphasize that this corollary  gives a sharper criterion for permanence than the one in  \cite{FOS}, and moreover applies to a much larger family of delayed structured models. For instance, in the case of Nicholson systems, the result in Corollary \ref{cor3.0} was established in \cite[Theorem 3.5]{FOS} only for the case of Nicholson systems \eqref{NFOS} with {\it discrete delays} and all coefficients {\it bounded}.

 Some illustrative examples, as well as counter-examples showing the necessity of our assumptions, are now presented.

\begin{exmp}\label{exmp2.3}  {\rm This counter-example is based on a counter-example due to Gy\H{o}ri and Horv\'ath \cite{GH17}, and shows that if (H2) holds but the coefficients $a_{ij}(t)$ are not bounded, then even  the asymptotic stability of  \eqref{Lin0} may fail.

Consider a planar linear DDE of the form
\begin{equation}\label{MG3_Lin}
 \begin{split}
x_1'(t)&=-d(t) x_1(t)+a(t) x_2(t-\tau(t))\\
x_2'(t)&=-d(t) x_2(t)+a(t) x_1(t-\tau(t))\end{split}\q ,\ t\ge 0,
\end{equation}
and the  scalar equation
\begin{equation}\label{GH_Lin}
 y'(t)=-d(t) y(t)+a(t) y(t-\tau(t)),\q t\ge 0,
\end{equation}
where $d(t),a(t),\tau(t)$ are continuous and positive and $\tau(t)$ is bounded,  for $t\in\R^+$. It is clear that if $y(t)$ is a solution of  \eqref{GH_Lin},  then $x(t)=(y(t),y(t))$ is a solution of  \eqref{MG3_Lin}.


Take $C>\tau$. Following the example in \cite[Proposition 1]{GH17},  choosing $d(t)$ such that
$$
d(t)=\frac{1}{(t+C)(t+1+C)}+a(t)\frac{(t-\tau(t)+C+1)(t+C)}{(t-\tau(t)+C)(t+C+1)},
$$
then
$$\var(t)=1+\frac{1}{t+C}$$
is a solution of the linear equation \eqref{GH_Lin}.
On the other hand, since
$$d(t)-a(t)=\frac{1}{(t+C)(t+1+C)}+a(t) \frac{\tau(t)}{(t-\tau(t)+C)(t+C+1)},$$
if $\mu>0$ is fixed and $a(t)$ is chosen to be
$a(t)=\mu \frac{(t-\tau(t)+C)(t+C+1)}{\tau(t)},$
we have $a(t)\to \infty$ as $t\to \infty$ and $d(t)-a(t)=\frac{1}{(t+C)(t+1+C)}+ \mu\ge \mu$. With our previous notations, for  \eqref{MG3_Lin} we have
$$D(t)=\diag (d(t),d(t)),\q A(t)=\left[ \begin{array}{cc}0&a(t) \\
a(t)&0\end{array}\right],$$ thus $[D(t)-A(t)-\mu I]\vec 1>0$, and (H2) is satisfied. However, (H2*) does not hold. Since  \eqref{MG3_Lin} possesses a solution $(\var(t),\var(t))\to (1,1)$ as $t\to \infty$, the system is not asymptotically stable. }
\end{exmp}

\begin{exmp}\label{exmp2.1}  {\rm Consider the Mackey-Glass-type  system
 \begin{equation}\label{MG1}
 \begin{split}
x_i'(t)&=-d_{ii}t^\eta x_i(t)+\sum_{ j=1,j\ne i}^nd_{ij}t^\eta x_j(t)+\sum_{j=1}^nb_{ij}t^\eta\int_{-\tau_{ij}(t)}^0 x_j(t+s)\, ds\\
&+\be_i(t)\int_{-\sigma_i(t)}^{0}k_i(s)\frac{x_i(t+s)}{1+c_i(t)x_i(t+s)^{\nu_i}}ds,\q 1\le i\le n,
\end{split}
\end{equation}
where $\eta>0,\nu_i>0, b_{ij}, d_{ij}\in \R^+$ with $d_i:=d_{ii}>0$ for all $i$, and the delays $\tau_{ij}(t), \sigma_i(t)$ are continuous with  $0\le \tau_{ij}(t)\le r_{ij}, 0<\sigma_i(t)\le R_i$ for some constants $r_{ij},R_i>0$, $\be_i,c_i:\R^+\to (0,\infty)$ are continuous, $c_i(t)$ are bounded, and $k_i:\R^-\to\R^+$ are integrable with $\int_{-\sigma_i(t)}^0k_i(s)\, ds=1$,   $t\ge 0,i,j=1,\dots,n$.  Clearly,  (H4) is satisfied with $h_i^-(x)=\frac{x}{x+\ol{c}_ix^{\nu_i}}$, for $\ol{c}_i>0$ such that  $c_i(t)\le \ol{c}_i\, (1\le i\le n)$.

  With the previous  notation, we have $d_{i}(t)=d_{i}t^\eta, a_{ij}(t)=((1-\de_{ij})d_{ij}+b_{ij}\tau_{ij}(t))t^\eta \le [(1-\de_{ij})d_{ij}+b_{ij}r_{ij}] t^\eta$, where $\de_{ij}=1$ if $i=j$ and  $\de_{ij}=0$ if $i\ne j$. Define the $n\times n$ matrices $D=\diag\, (d_1,\dots,d_n), A=\Big [(1-\de_{ij}) d_{ij}+b_{ij}r_{ij}\Big ]$, so that $D(t)=t^\eta D, A(t)=t^\eta A$. Assume that
$$N:=D- A$$
 is a {\it non-singular M-matrix}; or, in other words, that there exists a positive vector $v$ such that $u:=Nv>0$. For $\de>0$ small such that $\de Av\le u$, we have $Dv\ge (1+\de)Av$,
 hence  (H2*) is satisfied.  From Theorem \ref{thm2.1}(ii) we deduce that the linear system
 $$x_i'(t)=-d_{ii}t^\eta x_i(t)+\sum_{ j=1,j\ne i}^nd_{ij}t^\eta x_j(t)+\sum_{j=1}^nb_{ij}t^\eta\int_{-\tau_{ij}(t)}^0 x_j(t+s)\, ds,\q i=1,\dots,n,$$ is exponentially asymptotically stable. Note however that none of its coefficients is  bounded on $\R^+$.
 
 Next, suppose that $0<\nu_i\le 1$ for all $i$, which implies that $\lim_{x\to\infty}h_i^-(x)=\infty$ if  $0<\nu_i< 1$
 and $\lim_{x\to\infty}h_i^-(x)=1$ if  $\nu_i=1$. If there is $\al >1$ such that $\be_i(t)v_i\ge \al t^\eta u_i$ for $t\gg 1$, then also $\be_i(t)v_i\ge t^\eta u_i+\de $, for some $\de>0$ and for $t\gg 1$,  $1\le i\le n$. Under this condition, both (H5) and
 (H5*) hold. From Theorem \ref{thm2.4} we conclude that \eqref{MG1} is uniformly persistent.
}
\end{exmp}

\begin{exmp}\label{exmp2.2}  {\rm Consider the  planar  system 
 \begin{equation}\label{MG2}
 \begin{split}
x_1'(t)&=-t^\eta x_1(t)+(t^\eta-1) x_2(t-\tau_1(t))
+\be h_1(t,x_1(t-\sigma_1(t))\\
x_2'(t)&=-t^\eta x_2(t)+(t^\eta-1) x_1(t-\tau_2(t))
+\be h_2(t,x_2(t-\sigma_2(t))\end{split}\q ,\ t\ge 1,
\end{equation}
 with nonlinearities of either Mackey-Glass or Nicholson type, $$h_i(t,x)=\frac{x}{1+c_i(t)x^{\nu_i}}\ {\rm with} \ \nu_i\ge 1,\q {\rm or}\q
h_i(t,x)=x\e^{-c_i(t)x},
$$ and $\eta>0,\be >1$, where the delays $\tau_i(t), \sigma_i(t)$ are  nonnegative, continuous and bounded,  $c_i(t)$ are positive, continuous and   $0<c_i(t)\le \ol{c}_i$ for some constants $\ol{c}_i,\, i=1,2$.  
 With  the previous notation, $d_i(t)=t^\eta, a_{ii}(t)=0, \be_i(t)\equiv\be>1,\, i=1,2$ and $a_{12}(t)=a_{21}(t)=t^\eta -1$, thus 
$$D(t)=\diag (t^\eta,t^\eta),\q A(t)=\left[ \begin{array}{cc}0&t^\eta-1 \\
t^\eta-1&0\end{array}\right], \q M(t)=\left[ \begin{array}{cc}\be-t^\eta&t^\eta-1 \\
t^\eta-1&\be-t^\eta\end{array}\right].$$
As
$$[D(t)-A(t)]\left[ \begin{array}{cc}1 \\
1\end{array}\right]=\left[ \begin{array}{cc}1 \\
1\end{array}\right],\q M(t)\left[ \begin{array}{cc}1 \\
1\end{array}\right]=\left[ \begin{array}{cc}\be-1 \\
\be -1\end{array}\right],$$
 (H2), (H5) (and thus also (H5*)) are satisfied. Since $a_{12}(t),a_{21}(t)$ are not bounded, we cannot deduce that \eqref{MG2} is permanent (nor that (H2*) is satisfied). However, if there are no delays in the linear part, i.e., $\tau_i(t)=\tau_2(t)\equiv 0$ in  \eqref{MG2}, from Theorem \ref{thm2.1}(iii) we deduce that the linear ODE
$
 \begin{cases}
x_1'(t)=-t^\eta x_1(t)+(t^\eta-1) x_2(t)\\
x_2'(t)=-t^\eta x_2(t)+(t^\eta-1) x_1(t)\end{cases}$
is exponentially asymptotically stable. The permanence of \eqref{MG2} follows then from Corollary \ref{cor2.1}.}
\end{exmp}

\begin{exmp}\label{exmp2.4}  {\rm Consider the scalar equation
 \begin{equation}\label{exmp4}x'(t)=-d(t)x(t)+a(t)x(t-\tau(t))+\be (t)h(x(t)),\q t\ge 0,\end{equation}
where $\tau(t)$ is continuous with $0\le \tau(t)\le \tau$, $h(x)= \begin{cases} x^2\ {\rm if}\ 0\le x\le 1\\ 1\ {\rm if}\ x\ge 1\end{cases}$ and
$$a(t)=\mu \frac{t+C-\tau(t)}{\tau(t)},\ \be(t)=\mu_1\frac{t+C}{t+C-1}, \ d(t)=a(t)+(\be(t)+1)\frac{1}{t+C}+\mu.$$
for some constants $C> \max (\tau,1), \mu>0, \mu_1>\mu +1/C$. One easily sees that this equation has the solution $\var(t)=\frac{1}{t+C}$, thus \eqref{exmp4} is not persistence.  Note that $\be(t)$ is bounded and  $d(t)-a(t)>\mu, \be(t)-d(t)+a(t)=\mu_1-\mu-\frac{1}{t+C}\ge \mu_1-\mu-\frac{1}{C}>0$, therefore (H2), (H5) and (H5*) are satisfied.  However, hypothesis (H4) is not fulfilled, because $h'(0)=0$ (conf. Remark \ref{rmk2.3}).}\end{exmp}

\begin{exmp}\label{exmp2.5}  {\rm Consider the system
 \begin{equation}\label{exmp5}
 \begin{split}x_1'(t)&=-(a(t)+d_1(t))x_1(t)+a(t)x_2(t)+\be (t)h(x_1(t-\tau)),\\
  x_2'(t)&=-(a(t)+d_1(t))x_2(t)+a(t)x_1(t)+\be (t)h(x_2(t-\tau)),\q t\ge 0,
 \end{split}\end{equation}
where $\tau>0$, $a(t)$ is continuous, nonnegative and bounded, $h(x)=\frac{x}{1+x}$ and
$$d_1(t)=\mu \frac{t+C}{1-\tau}, \q \be(t)=\frac{t+C+1-\tau}{t+C}\Big (d_1(t)-\frac{1}{t+C}\Big),$$
for some $\tau\in (0,1)$ and $C>\tau$.
It is easy to verify that 
$x(t)=\Big(\frac{1}{t+C},\frac{1}{t+C}\Big)$
is a solution of \eqref{exmp5}, thus this system is not permanent.
Note that $d_1(t)\ge \mu \frac{C}{1-\tau}, \be(t)-d_1(t)=\mu-\frac{t+C+1-\tau}{(t+C)^2}\ge \mu/2$ for $t\gg 1$,
hence (H2) and (H5) are satisfied. But Theorem \ref{thm2.3} does not apply, because $\be(t)$ is not bounded.
}\end{exmp}

\section{Discussion and open problems}
\setcounter{equation}{0}

In this paper, we have proven the permanence of delayed differential systems \eqref{NonLin} which  incorporate distributed delays in both the linear and nonlinear parts and are in general noncooperative. Moreover, not all the coefficients are required to be bounded. The main  Theorem \ref{thm2.3} extends known results in recent literature \cite{BBI,BIT,FOS,FariaRost, GHM18,GHM18a,Liu10}, as it applies to a broad family of nonautonomous delay differential systems. 

Once the permanence of \eqref{NonLin} is guaranteed, several open questions arise and should be addressed. First, it would be interesting to have explicit  lower and upper uniform bounds for all positive solutions, as investigated  in \cite{BIT,FariaAMC14,FariaJDDE16,GHM15,GHM18, GHM18a} for cooperative scalar or $n$-dimensional DDEs and in \cite{FariaRost,Liu10} for  noncooperative systems. Secondly,  the global stability of DDEs is a matter of  crucial importance in applications, therefore a relevant  task is to  propose sufficient conditions forcing $x(t)-y(t)\to 0$ as $t\to\infty$, for  any  two positive solutions $x(t),y(t)$ of \eqref{NonLin}. In the case of nonautonomous noncooperative models, it is however clear that  the response to these two questions  depends on the specific nonlinearities. In a forthcoming paper, 
  these topics will be addressed for generalized Nicholson systems. For periodic $n$-dimensional DDEs, it has been proven \cite{Zhao} that in some settings  the permanence implies the existence of a positive  periodic solution -- in this context, a stability result will show that such a periodic solution is a global attractor of all positive solutions.

It is worthwhile mentioning that, in  the last few years,   the     stability of nonautonomous linear 
 DDEs has received a great deal of attention,  and several methods have been used to obtain  explicit sufficient conditions for  the asymptotic and exponential asymptotic stability of a general linear system \eqref{Lin0}, see e.g.~\cite{BDSS18,Faria20,GH17} and references therein. Actually,  both delay independent and delay-dependent  criteria for the stability of linear DDEs with possible {\it infinite} delays were given in \cite{Faria20}. Since the exponential stability of \eqref{Lin0} is a key ingredient to show the permanence of   \eqref{NonLin}, this leads us to two natural lines of future research, explained below.
 
 The first one is to replace assumption (H2) or (H2*) -- which forces \eqref{Lin0} to possess   diagonal  terms without delay which dominate the effect of the delayed terms -- by a condition depending on the size of delays in such a way that  \eqref{Lin0} maintains the exponential asymptotic stability, and further analyse how such a condition interplays with the assumption (H5). 
 
 Another open problem is to study the persistence and permanence of systems of the form \eqref{NonLin} with {\it unbounded} delays. DDEs with infinite delay are surely more challenging: not only an {\it admissible} phase space  satisfying some fundamental set of axioms should be chosen \cite{HMN}, but most techniques for finite delays do not apply for such equations. 
 There has been some work on permanence for scalar nonautonomous DDEs with infinite delay, see e.g.~\cite{GH17}. In the case of multidimensional  DDEs with infinite delay, the work in \cite{FariaJDDE16}  only  contemplates situations of cooperative systems, namely of the form $x_i'(t)=F_i(t,x_t)-x_i(t)G_i(x_t)\, (1\le i\le n)$ with $F_i,G_i$ cooperative and $F_i(t,x)$ sublinear in $x\in \R^+$. For the case of nonmonotone nonlinearities in  \eqref{NonLin}, it is clear that the technique developed in the proof of Theorem \ref{thm2.3} does not apply to systems with infinite delay, since it relies on a step-wise iterative argument on intervals of lenght $\tau$, where $\tau$ is the supremum of all delays -- thus, new tools and arguments to tackle the difficulty must be proposed. This open problem  is a strong motivation for a next future investigation.


The treatment of  mixed monotonicity models, in what concerns questions of  permanence, is another topic deserving  attention, since they appear naturally in real-world applications.  In fact, there has been an increasing interest in DDEs with mixed monotonicity, where  the nonlinear terms involve one or more  functions with  different   delays e.g.~of the form $f(t,x(t-\tau(t)),x(t-\sigma(t)))$, with $f(t,x,y)$  monotone increasing in the variable $x$ and monotone decreasing in $y$. 
As as illustrated by Berezansky and Braveman \cite{BB17},  though  small delays are in general harmless, the presence of  two or more delays in the same nonlinear function may change drastically the  global  properties of the solutions. The
 permanence and stability of DDEs with nonlinearities of mixed monotonicity have been analyzed in \cite{BB16, BB17,ElRuiz,GHM15,GHM18a}. As far as the author knows,  only the case of  discrete delays has been dealt with. As seen,  systems \eqref{NonLin} encompass models with noncooperative nonlinearities, nevertheless cooperative techniques were used in our arguments. Therefore, new tools are required to handle the case of  mixed monotonicity in the nonlinear terms.
 
\section*{Acknowledgements}
This work was  supported by National Funding from FCT - Funda\c c\~ao para a Ci\^encia e a Tecnologia (Portugal) under project UIDB/04561/2020.

\end{document}